\newtheorem{theorem}{Theorem}
\newtheorem*{theorem*}{Theorem}
\newtheorem{proposition}[theorem]{Proposition}
\newtheorem*{proposition*}{Proposition}
\newtheorem{corollary}[theorem]{Corollary}
\newtheorem{lemma}[theorem]{Lemma}
\theoremstyle{definition}
\newtheorem{definition}[theorem]{Definition}
\theoremstyle{remark}
\newtheorem{remark}[theorem]{Remark}
\numberwithin{equation}{section} \numberwithin{figure}{section}
\DeclareMathOperator{\Pic}{Pic} 
\DeclareMathOperator{\Gal}{Gal} 
 \DeclareMathOperator{\Spec}{Spec}
\DeclareMathOperator{\Hom}{Hom}
\DeclareMathOperator{\Br}{Br} 
\DeclareMathOperator{\inv}{inv}
\newcommand{\kbar}{{\bar{k}}}
\newcommand{\Xbar}{\bar{X}}
\newcommand{\Adele}{\mathbf{A}}
\newcommand\FF{\mathbb{F}}
\newcommand\PP{\mathbb{P}}
\newcommand\ZZ{\mathbb{Z}}
\newcommand\QQ{\mathbb{Q}}
\newcommand\GG{\mathbb{G}}
\newcommand\Gm{\GG_\mathrm{m}}
\newcommand\OO{\mathcal{O}}
\newcommand{\cA}{\mathcal{A}}
\newcommand{\fo}{\mathfrak{o}}
\newcommand{\fp}{\mathfrak{p}}
\newcommand{\mmu}{\boldsymbol{\mu}}
\renewcommand{\H}{\mathrm{H}}
\newcommand{\ff}[1]{{\kappa(#1)}}
\newcommand{\Zn}{\ZZ/n}
\newcommand{\isom}{\cong}
\newcommand{\QZ}{\QQ/\ZZ}
\newcommand{\sX}{\mathcal{X}}
\newcommand{\sD}{\mathcal{D}}
\newcommand{\sV}{\mathcal{V}}
\newcommand{\sB}{\mathcal{B}}
\newcommand{\sP}{\mathcal{P}}
\newcommand{\p}{\mathfrak{p}}
\newcommand{\R}{\mathrm{R}}
\newcommand{\et}{\mathrm{\acute{e}t}}
\DeclareMathOperator{\spec}{sp}
\newcommand{\sm}{\mathrm{sm}}
\newcommand{\dual}[1]{{#1}^\vee}
\newcommand{\sHom}{{\mathcal{H}\!\mathit{om}}}
\DeclareMathOperator{\catsh}{Sh}
\DeclareMathOperator{\catpsh}{PSh}
\newcommand{\cC}{\mathcal{C}}
\author{Martin Bright}
\address{Mathematisch Instituut, Niels Bohrweg 1, 2333 CA Leiden, Netherlands}
\email{m.j.bright@math.leidenuniv.nl}
\subjclass[2010]{Primary 11G35; Secondary 14G05, 14G25, 14F22}
\title{Obstructions to the Hasse principle in families}
\begin{document}

\begin{abstract}
For a family of varieties over a number field, we give conditions under which 100\% of members have no Brauer--Manin obstruction to the Hasse principle.
\end{abstract}

\maketitle

\section{Introduction}

In this article, we study the Hasse principle and the Brauer--Manin obstruction to it, in the context of a family of varieties.  We begin by briefly recalling these concepts; for more details, see~\cite{Skorobogatov:TRP}.  Let $Y$ be a smooth, projective, geometrically irreducible variety over a number field $k$.  Let $\Adele_k$ denote the ring of ad\`eles of $k$.  A necessary condition for $Y$ to have a $k$-rational point is that $Y$ have points over every completion of $k$; equivalently, that $Y(\Adele_k)$ be non-empty.  A class $\cC$ of varieties is said to satisfy the \emph{Hasse principle} if, for varieties in $\cC$, this necessary condition is also sufficient: in other words, for all $Y \in \cC$, the implication 
\[
Y(\Adele_k) \neq \emptyset \quad \Rightarrow \quad Y(k) \neq \emptyset
\]
holds.  The class $\cC$ satisfies \emph{weak approximation} if, for $Y \in \cC$, the diagonal image of $Y(k)$ is dense in $Y(\Adele_k)$.

Not all varieties do satisfy the Hasse principle, and Manin~\cite{Manin:GBG} used the Brauer group to explain many such counterexamples to the Hasse principle.  Define the Brauer group of the variety $Y$ to be the group $\Br Y = \H^2(Y,\Gm)$.  Consider the diagonal embedding of $Y(k)$ into $Y(\Adele_k)$.  Manin observed that the image of this embedding must be contained in the subset
\[
Y(\Adele_k)^{\Br} = \big\{ (x_v) \in Y(\Adele_k) \mid \sum_v \inv_v \cA(x_v) = 0 \text{ for all } \cA \in \Br Y \big\}.
\]
Here $\inv_v \colon \Br k_v \to \QZ$ is the local invariant map, and the sum is over all places of $k$.
If $Y(\Adele_k)^{\Br}$ is empty, then $Y(k)$ is also empty.  If in addition $Y(\Adele_k)$ is non-empty, then we say that there is a \emph{Brauer--Manin obstruction to the Hasse principle} on $Y$.  Whenever $Y(\Adele_k)^{\Br}$ differs from $Y(\Adele_k)$, we say that there is a \emph{Brauer--Manin obstruction to weak approximation} on $Y$.  By restricting $\cA$ to lie in a subset $B \subset \Br Y$, we can also talk about an obstruction coming from $B$.  Since the quantity $\sum_v \inv_v \cA(x_v)$ only depends on the class of $\cA$ modulo $\Br k$, we can also talk about the obstruction coming from a given subset of $\Br Y / \Br k$.  The set $Y(\Adele_k)^{\Br}$ is effectively computable in many situations of interest.  We say that \emph{the Brauer--Manin obstruction is the only obstruction to the Hasse principle} for a class $\cC$ of varieties if, for all varieties $Y \in \cC$, the implication $Y(\Adele_k)^{\Br} \neq \emptyset \Rightarrow Y(k) \neq \emptyset$ holds.
There are varieties for which the Brauer--Manin obstruction is not the only one: the first example was given by Skorobogatov~\cite{Skorobogatov:IM-1999}.
But it has been conjectured by Colliot-Th\'el\`ene~\cite[p.~319]{CT:AFST-1992} that the Brauer--Manin obstruction is the only obstruction to the Hasse principle for geometrically rational varieties.  This conjecture is far from being proved apart from in a few very specific cases; however, it supports the idea that studying the Brauer--Manin obstruction is a useful proxy for studying the existence of rational points on varieties.

Experience with examples of Brauer--Manin obstructions seems to show that it is much easier to construct counterexamples to weak approximation than to construct counterexamples to the Hasse principle.  Indeed, given a variety $Y$ with non-trivial Brauer group, one ``usually'' expects there to be an obstruction to weak approximation on $Y$ but no obstruction to the Hasse principle.  This article can be seen as an attempt to make these empirical observations precise.

A useful example to keep in mind is that of diagonal cubic surfaces.  Colliot-Th\'el\`ene, Kanevsky and Sansuc proved the following:
\begin{theorem*}[\cite{CTKS}, Corollary to Proposition~2]
Let $V$ be the cubic surface over $\QQ$ given by the equation
\[
ax^3 + by^3 + cz^3 + dt^3 = 0,
\]
where $a,b,c,d$ are non-zero cube-free integers.  If there is a prime number dividing precisely one of $a,b,c,d$, then there is no Brauer--Manin obstruction to the Hasse principle on $V$.
\end{theorem*}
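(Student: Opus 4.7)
Say $p$ divides $a$ but not $bcd$; since $a$ is cube-free, $v_p(a) \in \{1,2\}$. The strategy is to reduce the theorem to a local computation at $p$: show that the evaluation of the classes in $\Br V/\Br\QQ$ on $V(\QQ_p)$ is surjective onto the ambient group $(\tfrac{1}{3}\ZZ/\ZZ)^r$ in a coordinated way, and then correct any given adelic point by replacing its $p$-component. First I would recall the explicit description of $\Br V/\Br\QQ$ for smooth diagonal cubic surfaces, which is $3$-torsion of order dividing $9$, with generators given by cyclic algebras $(\alpha,\chi_3)$, where $\alpha$ is a ratio of monomials in $a,b,c,d$ and cubes of the projective coordinates, and $\chi_3$ is the cubic Kummer character of $\QQ(\zeta_3)/\QQ$.

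The key local step is to analyse the evaluation map $\ev_p\colon V(\QQ_p) \to \Hom(\Br V/\Br\QQ,\tfrac{1}{3}\ZZ/\ZZ)$. Because $p$ divides exactly one coefficient, the special fibre of the natural integral model of $V$ over $\ZZ_p$ degenerates to the cone $by^3+cz^3+dt^3 = 0$ in $\PP^3_{\FF_p}$, singular only at the vertex $(1{:}0{:}0{:}0)$; smooth $\FF_p$-points on this cone lift by Hensel's lemma to points of $V(\QQ_p)$, so in particular $V(\QQ_p) \neq \emptyset$. More importantly, the $x$-coordinate at such a lift can be perturbed freely by quantities of positive $p$-adic valuation, and this perturbation moves the Kummer class of any monomial involving $ax^3$ through all three cosets of $\QQ_p^\times/(\QQ_p^\times)^3$. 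Consequently, the local invariants $\inv_p\cA$ of the relevant cyclic algebras take all three values in $\tfrac{1}{3}\ZZ/\ZZ$ as the lift varies.

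Granted the local surjectivity, the global conclusion is essentially formal. Given $(x_v) \in V(\Adele_\QQ)$ and a set of generators $\cA_1,\dots,\cA_r$ of $\Br V/\Br\QQ$, set $\sigma_i = -\sum_{v\neq p} \inv_v\cA_i(x_v)$ in $\tfrac{1}{3}\ZZ/\ZZ$; by the local step, choose $x_p' \in V(\QQ_p)$ with $\inv_p\cA_i(x_p') = \sigma_i$ for every $i$, and replace $x_p$ by $x_p'$. The resulting adelic point lies in $V(\Adele_\QQ)^{\Br}$, so there is no Brauer--Manin obstruction to the Hasse principle on $V$.

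The main technical hurdle is the \emph{simultaneous} surjectivity in the local step in the case where $\Br V/\Br\QQ$ has rank $2$: one must produce $p$-adic points realising all nine combinations of the two Brauer invariants independently. This reduces to a Hilbert-symbol computation in $\QQ_p(\zeta_3)$ that genuinely uses the asymmetry created by having exactly one coefficient divisible by $p$; a case analysis on whether $p \equiv 1$ or $2 \pmod 3$ and on $v_p(a) \in \{1,2\}$ should then exhibit the required independent variation of the two invariants.
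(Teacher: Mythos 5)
Your plan reproduces the method the paper attributes to CTKS: determine $\Br V/\Br\QQ$, prove that evaluating Brauer classes on $V(\QQ_p)$ surjects onto the relevant group of invariants, and then correct the $p$-component of any adelic point. That part of the outline is sound and matches the paper's description.

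There is, however, a genuine gap. The hypothesis allows $p=3$, but every step of your local argument silently assumes $p\neq 3$. When $p=3$, the special fibre of the naive model is $by^3+cz^3+dt^3\equiv(\bar b^{1/3}y+\bar c^{1/3}z+\bar d^{1/3}t)^3\pmod 3$ over $\bar\FF_3$, a triple plane; it has no smooth $\FF_3$-points at all, so Hensel's lemma produces nothing and the perturbation/residue argument collapses. The paper explicitly flags that "the case $p=3$ is rather different and is treated separately," and CTKS indeed give a distinct analysis there; your sketch needs to either treat that case or explain why it can be avoided. A smaller point: the paper's sketch says that, for generic $V$ satisfying the hypothesis, $\Br V/\Br\QQ$ has order exactly $3$. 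You are right that the order-$9$ possibility (joint surjectivity of two independent invariants at one prime) would be the hard case, but you leave it entirely open; a complete argument must either show the hypothesis forces $\Br V/\Br\QQ$ to be cyclic (or trivial), or actually carry out the promised Hilbert-symbol computation. Finally, a notational slip: $\QQ(\zeta_3)/\QQ$ is quadratic, so there is no "cubic Kummer character of $\QQ(\zeta_3)/\QQ$"; the degree-$3$ cyclic algebras arise via corestriction from $\QQ(\zeta_3)$, not directly from a cubic character of that extension.
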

One checks that the condition holds for 100\% of $(a:b:c:d) \in \PP^3(\QQ)$.  The method of proof, at least for generic $V$ satisfying the condition of the theorem, is as follows. The authors firstly calculate that $\Br V / \Br \QQ \isom \H^1(\QQ, \Pic\bar{V})$ has order $3$.  Letting $\cA \in \Br V$ be a generator for $\Br V / \Br \QQ$, and taking $p \neq 3$ to be the prime number of the theorem, they show that the map $V(\QQ_p) \to \Br \QQ_p$ given by evaluating $\cA$ surjects onto $\Br \QQ_p[3]$.  From there, it is easy to deduce the absence of a Brauer--Manin obstruction to the Hasse principle, and indeed the presence of an obstruction to weak approximation.  (The case $p=3$ is rather different and is treated separately; but 100\% of varieties in the family satisfy the condition with $p \neq 3$.)

In~\cite{BBL:wa}, we studied weak approximation in families of varieties.  Theorem~1.6 of that article considers a family of varieties defined by a morphism $X \to \PP^m$, and gives sufficient conditions under which 100\% of locally soluble members of the family have a Brauer--Manin obstruction to weak approximation.  In this article, we extend the method of~\cite{BBL:wa} to prove a corresponding result for the Hasse principle: that, in a suitable family of varieties, 100\% have no Brauer--Manin obstruction to the Hasse principle.  
The method used in the present article is, of necessity, strictly stronger.
To give an obstruction to weak approximation, one only has to consider a single element of the Brauer group.  To prove the absence of an obstruction to the Hasse principle, it is not enough to show that no single element gives an obstruction; one must consider the whole Brauer group simultaneously.
The main challenge of this article is to extend the technical results of~\cite{BBL:wa} from single elements to the whole of the Brauer group.

Denote by $\kbar$ an algebraic closure of the number field $k$.  The \emph{algebraic Brauer group} of a variety $Y$ is the subgroup $\Br_1 Y = \ker ( \Br  Y \to \Br \bar{Y} )$, where $\bar{Y}$ denotes the base change of $Y$ to $\kbar$.  Thus $\Br_1 Y$ consists of those elements that become trivial over $\kbar$.  In this article we are principally interested in algebraic Brauer groups (though the main technical results, in Section~\ref{sec:surj}, apply more generally).  The reason for this is that we can use a result of Harari~\cite{Harari:DMJ-1994} to show that the algebraic Brauer group is the same for ``most'' varieties in a family.

We would like to construct Brauer group elements on the varieties in a family by restricting from the generic fibre.  However, as was shown by Uematsu~\cite{Uematsu:QJM-2014}, this is not always possible: in the family of diagonal cubic surfaces, a general element has non-trivial algebraic Brauer group, whereas the generic fibre has trivial algebraic Brauer group.
To avoid this problem, we make use of the isomorphism $\Br_1 Y / \Br k \isom \H^1(k, \Pic \bar{Y})$ which exists whenever $Y$ is a smooth, projective, geometrically irreducible variety over a number field $k$.  Instead of specialising elements of the Brauer group of the generic fibre, we specialise elements of the corresponding cohomology group, which is $\H^1(K, \Pic X_{\bar{\eta}})$ in the notation of Theorem~\ref{thm} below.

In order to count points, we need a notion of height.    
For each finite place $v$ of the number field $k$, let $\|\cdot\|_v$ be the corresponding normalised absolute value on $k_v$, that is, the absolute value satisfying $\| \pi \|_v = 1/q$ where $\pi$ is a uniformiser in $k_v$ and $q$ is the size of the residue field at $v$.
If $v$ is a real place, then define $\|\cdot\|_v$ to be the usual real absolute value on $k_v$; is $v$ is a complex place, define $\|\cdot\|_v$ to be the square of the usual complex absolute value.
For a point $P \in \PP^m(k)$, we denote by $H(P)$ the exponential height of $P$, defined as follows:
choose coordinates $[x_0 : \dotsb : x_n]$ for $P$; then
\[
H(P) = \prod_v \max \{ \|x_0\|_v, \dotsc, \|x_n\|_v \},
\]
where the product is over all places $v$ of $k$.

\begin{theorem}\label{thm}
Let $k$ be a number field and let $\pi \colon X \to \PP^m_k$ be a flat, surjective morphism of finite type, with $X$ smooth, projective and geometrically integral.  Let $\eta \colon \Spec K \to \PP^m_k$ denote the generic point and $\bar{\eta} \colon \Spec \bar{K} \to \PP^m_k$ a geometric point above $\eta$.  Suppose that the geometric generic fibre $X_{\bar{\eta}}$ is connected and has torsion-free Picard group.  Denote by $\Xbar$ the base change of $X$ to an algebraic closure $\kbar$ of $k$.  Assume the following hypotheses:
\begin{enumerate}
\item\label{locsol} $X(\Adele_k) \neq \emptyset$;
\item\label{codim1} the fibre of $\pi$ at each codimension-1 point of $\PP^m_k$ is geometrically integral;
\item\label{codim2} the fibre of $\pi$ at each codimension-2 point of $\PP^m_k$ has a geometrically reduced component;
\item\label{h1zero} $\H^1(k, \Pic \bar{X})=0$;
\item\label{brzero} $\Br \bar{X}=0$;
\item\label{h2inj} $\H^2(k, \Pic \PP^m_{\bar{k}}) \to \H^2(k, \Pic \bar{X})$ is injective.
\end{enumerate}
Denote by $U \subset \PP^m_k$ the open subset over which the fibre of $\pi$ are smooth.
Then, for 100\% of rational points $P \in U(k) \cap \pi(X(\Adele_k))$, the image of $\H^1(K, \Pic X_{\bar{\eta}})$ in $\Br X_P/\Br k$ gives no Brauer--Manin obstruction to the Hasse principle on $X_P$.
\end{theorem}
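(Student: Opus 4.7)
The plan is to reduce the statement to a joint local surjectivity property of the Brauer evaluation on a single fibre, and to establish that surjectivity by finding, for 100\% of $P$, enough primes at which the residues of a generating set of $\H^1(K, \Pic X_{\bar{\eta}})$ are ``activated'' in an independent way.

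Set $B := \H^1(K, \Pic X_{\bar{\eta}})$; since $\Pic X_{\bar{\eta}}$ is finitely generated, torsion-free and carries a finite Galois action, $B$ is a finite abelian group. By Harari's specialisation theorem~\cite{Harari:DMJ-1994}, combined with hypothesis~(\ref{h1zero}) and, via the Leray spectral sequence for $\pi$, hypothesis~(\ref{h2inj}), for 100\% of $P \in U(k)$ the group $B$ may be canonically identified with a finite subgroup $B_P \subset \Br_1 X_P / \Br k$. Given $(x_v) \in X_P(\Adele_k)$, define $\phi \in B_P^\vee := \Hom(B_P, \QZ)$ by $\phi(\cA) = \sum_v \inv_v \cA(x_v)$. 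Since the Brauer pairing is locally constant, an adelic point differing from $(x_v)$ only at a finite set $S$ of places lies in $X_P(\Adele_k)^{B_P}$ if and only if the sum of the changes in local evaluations over $v \in S$ equals $-\phi$. It therefore suffices to exhibit, for 100\% of $P$, a finite set $S$ of places such that the images of the local evaluation maps $\ev_v : X_P(k_v) \to B_P^\vee$ for $v \in S$ jointly generate $B_P^\vee$.

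To produce such an $S$, I would extend the methods of~\cite{BBL:wa} as follows. For each generator $\cA$ of $B$ there is an associated ramification divisor on $\PP^m_k$; for 100\% of $P$, reduction modulo many primes $v$ sends $P$ to a point of a codimension-$1$ stratum of this divisor, which by hypothesis~(\ref{codim1}) corresponds to a geometrically integral fibre of $\pi$. At such a prime the residue of $\cA$ is nontrivial on the reduction of $X_P$ and $\inv_v\cA : X_P(k_v) \to \Br k_v$ surjects onto a specified cyclic subgroup, while the residues of the remaining generators vanish at $v$ and those evaluations therefore factor through reduction modulo $v$ and are trivial on $X_P(k_v)$. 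Choosing one such prime $v_\cA$ for each generator of $B$, the contributions decouple and jointly fill $B_P^\vee$. Hypothesis~(\ref{codim2}) guarantees that reductions into codimension-$2$ strata, where the residues of different generators could interfere, account for only a density-zero set of $P$, via an Ekedahl-style equidistribution argument.

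The main obstacle, as emphasised in the introduction, is precisely this passage from ``one Brauer element at a time'' (sufficient for weak approximation) to the whole group $B_P$ simultaneously: single-element surjectivity as in~\cite{BBL:wa} would only produce a single cyclic subgroup of $B_P^\vee$ and would allow one to kill a single coordinate of $\phi$ but not the whole homomorphism. The refined analysis of Section~\ref{sec:surj} must therefore guarantee, for 100\% of $P$, that the generators of $B$ can be separated across independent primes so that their joint evaluation images cover all of $B_P^\vee$ and thus annihilate the obstruction $\phi$ for an arbitrary adelic starting point.
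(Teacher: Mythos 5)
Your high-level reduction is correct: you need to find, for 100\% of $P$, a finite set $S$ of places such that the local evaluation maps jointly surject onto the relevant part of $\dual{A}$, where $A$ is the image of $\H^1(K,\Pic X_{\bar{\eta}})$. You also correctly identify that the difficult point is passing from single elements to the whole group. However, the concrete mechanism you propose for doing so --- work generator by generator of $B=\H^1(K,\Pic X_{\bar{\eta}})$, find for each generator $\cA$ a ``private'' prime $v_\cA$ at which $\cA$ has a nontrivial residue but all other generators have trivial residue --- has a genuine gap. Nothing prevents several generators of $B$ from having their only nontrivial residues supported on the \emph{same} codimension-$1$ divisor $d$: in that case there is no way to separate them across primes, and at any prime lying above $\sD$ all these generators are activated simultaneously, so your ``decoupling'' claim (``the residues of the remaining generators vanish at $v$'') fails. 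Moreover even for a single generator of composite order, the residue at a single divisor $d$ need not detect the full cyclic subgroup, so one prime per generator is not in general enough (this is exactly the order-$6$ phenomenon discussed in Section~2).

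The paper's argument avoids the generator-by-generator decomposition altogether. The key technical tool, Lemma~\ref{lem:torsors}, replaces a finite subgroup $C=\rho_d(A)\subset\H^1(\sX_{\bar d},\Zn)$ by a \emph{single connected torsor} over $\sX_{\bar d}$ under the dual finite group $\dual{C}$; this is achieved via the canonical isomorphism $\H^1(T,\dual{G})\cong\Hom(G,\H^1(T,\QZ))$. Then Lemma~\ref{lem:ffpoints} and Corollary~\ref{cor:ffsurj}, which are Lang--Weil-type statements for torsors over finite fields, show that the evaluation map $\sX_s(\FF)\to\H^1(\FF,\dual{C})$ coming from this torsor is surjective. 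This produces, at a single suitable prime $\fp$ above $\sD$, the full image of $\dual{(\rho_d(A))}\to\dual{A}$, with no need to split into cyclic pieces. Combining over the finitely many $d$ with $\rho_d(A)\neq 0$, and using that $\oplus_d\rho_d$ is injective on $\H^1(K,\Pic X_{\bar{\eta}})$ (a consequence of hypotheses~(\ref{h1zero}), (\ref{brzero}), (\ref{h2inj}) via Corollary~5.16 of~\cite{BBL:wa}), the images of $\dual{(\rho_{d_i}(A))}$ cover all of $\dual{A}$. Two further remarks: first, for Theorem~\ref{thm} you do not need Harari's specialisation theorem --- the statement is only about the \emph{image} of $\H^1(K,\Pic X_{\bar{\eta}})$, which is defined directly via the map~\eqref{eq:h1h2} and $\spec_P$; Harari is needed only for Theorem~\ref{thm2}, where one wants the image to be all of $\Br_1 X_P/\Br k$. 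Second, the sieve input is not a new equidistribution argument but precisely~\cite[Proposition~4.3]{BBL:wa} together with~\cite[Theorem~1.3]{BBL:wa}, which show respectively that reductions landing in $\sV_i$ happen for 100\% of $P$ and that $T$ has positive density.
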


The conclusion of the theorem should be interpreted as follows: let $T$ denote the set $U(k) \cap \pi(X(\Adele_k))$, and let $T' \subset T$ be the subset of those $P \in T$ for which the image of $\H^1(K, \Pic X_{\bar{\eta}})$ in $\Br X_P/\Br k$ gives no Brauer--Manin obstruction to the Hasse principle on $X_P$.  Then
\[
\limsup_{B \to \infty} \frac{ \#\{ P \in (T \setminus T') \mid H(P) \le B \}}{ \#\{ P \in T \mid H(P) \le B \}} = 0.
\]

This theorem should be compared to Theorem~1.6 of~\cite{BBL:wa}, which has very similar hypotheses.  There, the conclusion was that 100\% of smooth fibres fail weak approximation; here, we prove the related but harder result that a Brauer--Manin obstruction vanishes.
The only differences in the hypotheses are as follows: in~\cite[Theorem~1.6]{BBL:wa} there was an assumption that either $\H^1(K,\Pic X_{\bar{\eta}})$ or $\Br X_\eta/\Br K$ be non-zero; but our Theorem~\ref{thm} is trivially true if $\H^1(K,\Pic X_{\bar{\eta}})$ vanishes.  On the other hand, Theorem~\ref{thm} demands that $X_{\bar{\eta}}$ have torsion-free Picard group; the conclusion of~\cite[Theorem~1.6]{BBL:wa} is automatically true when that condition does not hold, so the condition was not needed there.

Note that, under the hypotheses of Theorem~\ref{thm}, a positive proportion of the fibres $X_P$ are everywhere locally soluble: see~\cite[Theorem~1.3]{BBL:wa}.

\begin{remark}
The requirement that $\pi$ be flat is rather unhelpful for applications.  Families of interest often arise over an open base, which is then compactified.  The compactification may have singularities; resolving these singularities gives a regular family over a compact base, but possibly at the expense of flatness.

However, some such condition is needed for our proof to work: suppose that there were an element of $\Br X_\eta$, ramified on $X$ at a divisor having image of codimension $\ge 2$ in $\PP^m$.  
The approach followed in this article would be unable to show that such a Brauer element does not obstruct the Hasse principle.

The flatness condition is ultimately used in the proof of~\cite[Proposition~5.6]{BBL:wa}, which depends on the condition that, for any closed subset $S \subset \PP^m$ of codimension $c \le 3$, the inverse image $\pi^{-1}(S)$ also has codimension $c$ in $X$.  The proof therefore remains valid under the slightly weaker assumption that $\pi$ is flat outside a closed subset of codimension $\ge 3$ in $X$.
\end{remark}

\begin{remark}
In view of condition~(\ref{codim1}), there is a short exact sequence
\[
\Pic \PP^m_\kbar \to \Pic \Xbar \to \Pic \Xbar_\eta \to 0
\]
of $\Gal(\kbar/k)$-modules.  (Here $\Xbar_\eta$ is the generic fibre of $\Xbar \to \PP^m_\kbar$, not to be confused with $X_{\bar{\eta}}$.)  Moreover, the first map is injective, since the pull-back of a hyperplane in $\PP^m_\kbar$ has positive intersection number with a non-vertical curve in $X$.
Looking at the resulting long exact sequence in cohomology shows that conditions~(\ref{h1zero}) and~(\ref{h2inj}) are equivalent to the single condition $\H^1(k, \Pic \Xbar_\eta) = 0$.

One situation in which $\H^1(k, \Pic \Xbar_\eta) = 0$ is not satisfied is in the case of a constant family.  If $Y$ is a variety over $k$ having an obstruction to the Hasse principle given by an algebraic Brauer class, then taking $X=Y \times \PP^m_k$ gives an example failing this condition and for which the conclusion of the theorem is completely false.
\end{remark}

A specific case in which conditions~(\ref{h1zero}), (\ref{brzero}) and~(\ref{h2inj}) are satisfied is when $X$ is a non-constant complete intersection of dimension $\ge 3$ in $\PP^r \times \PP^m \to \PP^m$: see~\cite[Proposition~5.17]{BBL:wa}.  In this case, flatness is automatic as soon as the fibres of $X$ all have the same dimension: see~\cite[Chapter~I, Remarks~2.6]{Milne:EC}.

A theorem of Harari~\cite{Harari:DMJ-1994} shows that, under the additional assumption that the Brauer group of the geometric generic fibre $X_{\bar{\eta}}$ is trivial, the image under specialisation of $\H^1(K, \Pic X_{\bar{\eta}})$ is the whole of $\Br X_P/\Br k$ for most points $P$.  (Here ``most'' means that $P$ should lie in a Hilbertian subset of $\PP^m(k)$.)  This is the case, for example, when $X_\eta$ is a geometrically rational variety.  Combining this with Theorem~\ref{thm}, we obtain the following.

\begin{theorem}\label{thm2}
Under the conditions of Theorem~\ref{thm}, suppose moreover that $\Br X_{\bar{\eta}}$ is trivial.  Then, for 100\% of rational points $P \in U(k) \cap \pi(X(\Adele_k))$, there is no Brauer--Manin obstruction to the Hasse principle on $X_P$.
\end{theorem}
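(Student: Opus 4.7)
The plan is to obtain Theorem~\ref{thm2} by feeding Theorem~\ref{thm} into Harari's specialisation theorem. The extra hypothesis $\Br X_{\bar{\eta}} = 0$ is exactly what is needed to apply~\cite{Harari:DMJ-1994}: it yields a Hilbertian subset $H \subseteq \PP^m(k)$ with the property that, for every $P \in H$, one has $\Br \bar{X}_P = 0$ (so $\Br X_P = \Br_1 X_P$ and hence $\Br X_P/\Br k \isom \H^1(k, \Pic \bar{X}_P)$) and moreover the specialisation map $\H^1(K, \Pic X_{\bar{\eta}}) \to \H^1(k, \Pic \bar{X}_P)$ is surjective. Consequently, for each $P \in H$, the image of $\H^1(K, \Pic X_{\bar{\eta}})$ in $\Br X_P/\Br k$ is the \emph{whole} quotient group.

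Next I would invoke the classical density bound for Hilbertian subsets: the complement of a Hilbertian set in $\PP^m(k)$ is thin, and by Cohen's theorem a thin set contains only $O(B^{m+1/2})$ rational points of height at most $B$, while $\#\{P \in \PP^m(k) \mid H(P) \le B\} \sim c\, B^{m+1}$. Hence $H$ has density one among all rational points of $\PP^m(k)$, and a fortiori among the rational points in $T := U(k) \cap \pi(X(\Adele_k))$.

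To conclude, I would intersect with the density-one subset $T' \subseteq T$ produced by Theorem~\ref{thm}. For any $P$ in the density-one set $T' \cap H$, two things hold simultaneously: the image of $\H^1(K, \Pic X_{\bar{\eta}})$ in $\Br X_P/\Br k$ is all of $\Br X_P/\Br k$, and this image gives no Brauer--Manin obstruction on $X_P$. Hence the whole group $\Br X_P$ produces no Brauer--Manin obstruction to the Hasse principle for 100\% of $P \in T$, as required. The genuine content of Theorem~\ref{thm2} is absorbed into the two inputs (Theorem~\ref{thm} and Harari's theorem), and the deduction from them is essentially just the assembly described above; so no serious obstacle is expected, provided one checks that the precise form of Harari's result needed (simultaneous surjectivity of the specialisation map and vanishing of $\Br \bar{X}_P$ on a single Hilbertian set) is available.
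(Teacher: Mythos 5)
Your proposal follows essentially the same route as the paper: invoke Harari's specialisation theorem to get a Hilbertian set on which the image of $\H^1(K,\Pic X_{\bar{\eta}})$ fills up all of $\Br X_P/\Br k$, use that Hilbertian sets have density one (complement is thin), and intersect with the density-one set from Theorem~\ref{thm}. The only point you defer --- and the paper does carry out --- is verifying that Harari's specialisation map $\H^1(K,\Pic X_{\bar\eta})\to\H^1(k,\Pic\bar X_P)$ is compatible with the map $\spec_P$ (defined via $\H^0(K,\H^2(X_{\bar\eta},\mmu_n))$ and the Kummer sequence) that actually appears in Theorem~\ref{thm}; the paper checks this by chasing a commutative square and a Snake Lemma diagram, and this is precisely the ``precise form of Harari's result'' check you flag at the end.
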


The condition that $X_{\bar{\eta}}$ have torsion-free Picard group is a necessary one.  Indeed, Bhargava~\cite{Bhargava:hasse} has shown that a positive proportion of plane cubic curves fail the Hasse principle; if one assumes finiteness of Tate--Shafarevich groups, then these failures are all due to a Brauer--Manin obstruction~\cite[Theorem~6.2.3]{Skorobogatov:TRP}.

Although Theorem~\ref{thm2} says that Brauer--Manin obstructions to the Hasse principle are in some sense ``rare'', this is strictly in an arithmetic sense and not in a geometric one: Jahnel and Schindler~\cite{JS:AIF-2017} have constructed a set of del Pezzo surfaces of degree 4, all having a Brauer--Manin obstruction to the Hasse principle, that are Zariski-dense in the family of all del Pezzo surfaces of degree 4.  (That example does not quite fit into our setting, but only because the base scheme of the family is not a projective space but rather a Grassmannian.)

\begin{remark}
It is natural to ask whether statements such as Theorem~\ref{thm} should be expected when the base of the family is a variety other than projective space.  For example, one could ask what happens when the base is a Fano variety, and we count points with respect to the height given by an ample divisor.
Indeed, the geometric arguments both in this article and in~\cite{BBL:wa} apply in a broader setting.
However, there are barriers relating purely to the base variety, regardless of the particular family of varieties.

Our quantitative statements rely fundamentally on properties of the distribution of rational points on the base, which cannot be expected to hold unaltered for bases other than projective space.
In particular, the most na\"ive generalisation of Proposition~4.3 of~\cite{BBL:wa} is no longer true, as we now explain.  
Such a generalisation might be expected to state, roughly: given a variety $Y$ over a number field $k$, a divisor $D$ on $Y$ and an open subset $\sV$ of a model of $D$, then 100\% of the points of $Y(k)$ land in $\sV$ when reduced modulo some prime of $k$.
For a simple counterexample, take $Y$ to be a smooth cubic surface over $\QQ$ containing three rational, coplanar lines.  
Heath-Brown~\cite{HB:AA-1997} has proved that 100\% of the rational points on $Y$, ordered by height, lie on the lines.  (Indeed, Manin's conjecture would imply that such a phenomenon happens in much greater generality.)
Let $D \subset Y$ be a curve not contained in the union of the 27 lines on $Y$.
Choose a model for $Y$ over $\ZZ$, and let $\sV$ be the Zariski closure of $D$ minus the Zariski closure of the 27 lines;
then 100\% of the rational points of $Y$ lie on the lines, and so fail to land in $\sV$ modulo any prime.

The possibility remains open that some form of the result could hold after passing to a dense open subset of the base.
See~\cite{BL:sieving} for an approach to this and related questions.
\end{remark}

Throughout this article, all cohomology is \'etale cohomology, and
we use the notation $\Br S$ to denote the cohomological Brauer group $\H^2(S,\Gm)$ of a scheme $S$.
If $A$ is an Abelian group, then $A[n]$ and $A/n$ denote the kernel and cokernel, respectively, of multiplication by $n$, and $\dual{A}$ denotes the dual $\Hom(A,\QZ)$.  We will often use the fact that, for any scheme $S$, the natural injection $\Zn \to \QZ$ identifies $\H^1(S,\Zn)$ with the $n$-torsion in $\H^1(S,\QZ)$.

I thank Tim Browning, Daniel Loughran, Bas Edixhoven, Efthymios Sofos and several referees for their helpful comments and discussions relating to this article.

\section{Outline of the proof}

Let $k$ be a number field, and let $Y$ be a smooth, proper, geometrically irreducible variety over $k$.  Assume that $Y$ is everywhere locally soluble, that is, $Y(\Adele_k)$ is non-empty, and let $\cA \in \Br Y$ be a class in the Brauer group.  A much-used strategy for proving that $\cA$ gives no obstruction to the Hasse principle on $Y$ is the following.  Suppose that $\cA$ has order $n$ in $\Br Y$, and that there exists a finite place $v$ of $k$ such that the evaluation map $Y(k_v) \to \Br k_v[n]$, sending a point $P \in Y(k_v)$ to the evaluation $\cA(P) \in \Br k_v[n]$, is surjective.  Then $\cA$ gives no obstruction to the Hasse principle.  Indeed, any adelic point in $Y(\Adele_k)$ can be altered at the place $v$ to give an adelic point orthogonal to $\cA$.

As stated, this strategy is unsatisfactory because the surjectivity condition can be destroyed by changing $\cA$ by a constant algebra, that is, an element of the image of $\Br k \to \Br Y$.  Adding a constant algebra of arbitrarily large order, we can make the order of $\cA$ in $\Br Y$ arbitrarily large, yet the image of the evaluation map is simply translated and so stays the same size.  To compensate for this phenomenon, we can fix a base point $P \in Y(k_v)$ and consider the modified evaluation map $Y(k_v) \to \Br k_v$ defined by $Q \mapsto \cA(Q) - \cA(P)$.  This evaluation map only depends on $\cA$ modulo constant algebras.  As before, if the class of $\cA$ in $\Br Y / \Br k$ has order $n$ and the evaluation map surjects onto $\Br k_v[n]$, one can show that $\cA$ gives no obstruction to the Hasse principle.

In this article, we need to generalise the above strategy in two ways: to more than one algebra, and to more than one place of $k$.  The need to understand more than one algebra is clear: if $\Br Y/\Br k$ is not cyclic, then showing that there is no Brauer--Manin obstruction to the Hasse principle on $Y$ is strictly stronger than showing that no individual element of $\Br Y$ obstructs the Hasse principle.  As before, fix a place $v$ of $k$ and a point $P \in Y(k_v)$.  We consider the pairing
\begin{equation}\label{eq:brpairing}
\Br Y / \Br k \times Y(k_v) \to \QZ
\end{equation}
defined by
\[
(\cA,Q) \mapsto \inv_v \cA(Q) - \inv_v \cA(P).
\]
In~\cite[Definition~7.1]{Bright:bad}, a subgroup $A \subset \Br Y / \Br k$ was defined to be \emph{prolific} at $v$ if the map $Y(k_v) \to \Hom(A,\QZ)$ induced by the pairing~\eqref{eq:brpairing} is surjective.  One easily proves that, if there exists a place of $k$ where $A$ is prolific, then $A$ gives no Brauer--Manin obstruction to the Hasse principle.

We will also need to generalise this argument to several places of $k$, as illustrated by the following example.  Suppose that $\cA \in \Br Y$ is an algebra having order $6$ in $\Br Y / \Br k$.  Let $v$ and $w$ be two places of $k$ and fix base points $P_v \in Y(k_v)$ and $P_w \in Y(k_w)$.  Suppose that
\begin{itemize}
\item the evaluation map $Q_v \mapsto \big( \inv_v \cA(Q_v) - \inv_v \cA(P_v) \big)$ maps $Y(k_v)$ onto the subgroup $\{0,\frac{1}{2}\} \subset \QZ$; and
\item the evaluation map $Q_w \mapsto \big( \inv_w \cA(Q_w) - \inv_w \cA(P_w) \big)$ maps $Y(k_w)$ onto the subgroup $\{0,\frac{1}{3},\frac{2}{3}\} \subset \QZ$.
\end{itemize}
Then, by choosing points at both places $v$ and $w$, we can produce any invariant in $(\frac{1}{6}\ZZ)/\ZZ$.  As before, we conclude that $\cA$ gives no obstruction to the Hasse principle on $Y$.  With this example in mind, we make the following definition.

Let $S$ be a finite set of places of $k$, and fix a point $P_v \in Y(k_v)$ for each $v \in S$.  There is an evaluation pairing
\begin{equation}\label{eq:evalS}
(\Br Y / \Br k) \times \prod_{v \in S} Y(k_v) \to \QZ
\end{equation}
defined by
\[
(\mathcal{A},(Q_v)_{v \in S}) \mapsto \sum_{v \in S} (\inv_v \mathcal{A}(Q_v) - \inv_v \mathcal{A}(P_v)).
\]
The following definition extends that of~\cite[Definition~7.1]{Bright:bad}.
\begin{definition}
A subgroup $A$ of $\Br Y / \Br k$ is \emph{prolific} at $S$ if the map
\[
\prod_{v \in S} Y(k_v) \to \Hom(A,\QZ)
\]
induced by the above pairing is surjective.
\end{definition}
The definition is easily seen to be independent of the choice of the $P_v$.  A straightforward extension of~\cite[Proposition~7.3]{Bright:bad} shows that, if $A \subset \Br Y/\Br k$ is prolific at any set $S$ of places, then $A$ gives no Brauer--Manin obstruction to the existence of rational points on $Y$.

To prove Theorem~\ref{thm}, then, it suffices to show that 100\% of the fibres $X_P$ admit a set $S$ of primes at which the relevant subgroup of $\Br X_P/\Br k$ is prolific.  
It was shown in~\cite{Bright:bad} that, at least for $v$ sufficiently large, the evaluation map at $v$ corresponding to an element $\cA$ of $\Br_1 X_P$ only depends on the residue of $\cA$ at $v$.  To control these residues, we use the philosophy of~\cite{BBL:wa}.
Specifically, we generalise the following proposition.  The notation is that of Theorem~\ref{thm}; the relative residue map $\rho_d$ will be defined in Section~\ref{sec:res}.

\begin{proposition*}[\cite{BBL:wa}, Proposition~4.2]
Let $\alpha\in  \H^1(K,\Pic X_{\bar{\eta}})$
be a class of order $n >1$ and suppose that there is a point $d$ of codimension $1$ in $\PP^m$ such that the relative residue 
\[\rho_d(\alpha) \in \H^0(\ff{d},\H^1(X^\sm_{\bar{d}},\Zn))\] also has order $n$.
Let $\sX \to \PP^m_\fo$ be a model of $\pi$ over $\fo$, the ring of integers of $k$.
Denote by $\sD$ the Zariski closure of $d$ in $\PP^m_\fo$.  Then there is a dense open subset $\sV \subset \sD$ with the following property:
\begin{quote}
Let $P$ be a point of $U(k)$ such that $X_P$ is everywhere locally soluble, and suppose that the Zariski closure of $P$ in $\PP^m_\fo$ meets $\sV$ transversely at a closed point $s$.  Let $\p$ be the prime of $\fo$ over which $s$ lies.  Then
the algebra $\spec_P(\alpha) \in \Br X_P$ is prolific at $\fp$.
\end{quote}
\end{proposition*}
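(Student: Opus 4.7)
The plan is to reduce prolificity of $\spec_P(\alpha)$ at $\fp$ to a residue calculation on the special fibre: once one knows that the residue of $\spec_P(\alpha) \in \Br_1 X_P/\Br k$ at $\fp$ has order exactly $n$, it follows both that $\spec_P(\alpha)$ itself has order $n$ in $\Br_1 X_P/\Br k$ and that the local invariant pairing $X_P(k_\fp) \to \QZ$, normalised at any base point, surjects onto $\tfrac{1}{n}\ZZ/\ZZ$. That surjection is precisely prolificity of $\spec_P(\alpha)$ at the singleton $\{\fp\}$.

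First I would fix the open $\sV$. Shrinking $\sD$ if necessary, one can arrange that $\sV \to \Spec\fo$ is smooth, that $\sX|_\sV \to \sV$ is flat with smooth locus $(\sX|_\sV)^\sm$ smooth and surjective over $\sV$, and that the generic residue $\rho_d(\alpha) \in \H^0(\ff{d},\H^1(X_{\bar d}^\sm,\Zn))$ spreads to a class $\tilde\rho \in \H^1((\sX|_\sV)^\sm,\Zn)$ whose restriction to every geometric fibre of $(\sX|_\sV)^\sm \to \sV$ remains of order $n$ and defines a geometrically connected $\Zn$-cover. Then, for $P$ as in the statement with closure $\bar P \simeq \Spec\fo$ meeting $\sV$ transversely at $s$ above $\fp$, the model $\sX_P := \sX \times_{\PP^m_\fo} \bar P$ of $X_P$ has special fibre at $\fp$ canonically identified with $\sX_s$, and $\tilde\rho$ restricts there to a class of order $n$ defining a geometrically connected $\Zn$-cover of $(\sX_P)_\fp^\sm$.

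The technical core is to show that the arithmetic residue of $\spec_P(\alpha)$ at $\fp$ coincides with this restriction of $\tilde\rho$. Transversality of $\bar P$ and $\sD$ at $s$ means that their local equations form a regular system of parameters in $\OO_{\PP^m_\fo,s}$, so the iterated residue along $\sD$ followed by specialisation at $s$ computes the same class as the iterated residue along $\bar P$ followed by the arithmetic residue at $\fp$. This comparison is a Gysin-sequence computation of the same flavour as those in~\cite[\S 2--3]{Bright:bad}. Once it is in place, the residue of $\spec_P(\alpha)$ at $\fp$ is a character of order $n$ associated to a geometrically connected $\Zn$-cover of $(\sX_P)_\fp^\sm$, and the standard evaluation formula — combining Hensel surjectivity of the reduction $X_P(k_\fp) \to (\sX_P)_\fp^\sm(\ff{\fp})$ with a Chebotarev/Lang--Weil estimate on the associated cover — yields surjectivity of $X_P(k_\fp) \to \tfrac{1}{n}\ZZ/\ZZ$, provided the residue characteristic of $\fp$ is sufficiently large. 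A final shrinking of $\sV$ to exclude the finitely many bad residue characteristics completes the argument. The main obstacle is the residue-compatibility step; the remainder is a spreading-out exercise together with standard Brauer-evaluation arithmetic.
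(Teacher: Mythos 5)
Your proposal follows essentially the same route as the paper's (via~\cite{BBL:wa}): spread out the residue class over $\sD$, use transversality to identify the arithmetic residue of $\spec_P(\alpha)$ at $\fp$ with the reduction of the spread-out class at $s$, use a Lang--Weil-type statement to produce a rational point on the associated cover of the special fibre, and conclude surjectivity of the evaluation map by twisting and Hensel lifting. Two small technical points worth flagging. First, your opening claim that ``once one knows that the residue of $\spec_P(\alpha)$ at $\fp$ has order exactly $n$, it follows that \dots\ the local invariant pairing surjects onto $\tfrac1n\ZZ/\ZZ$'' is not a formal implication --- having a residue of order $n$ in $\H^1(\sX_s,\Zn)$ alone does not force the evaluation map to be onto; you correctly bring in the geometric-connectedness and Lang--Weil input later, so the body of the argument is right, but the summary sentence overstates what the order of the residue gives by itself. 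Second, the assertion that $\rho_d(\alpha) \in \H^0(\ff{d},\H^1(X^\sm_{\bar d},\Zn))$ spreads out to an honest cohomology class $\tilde\rho \in \H^1((\sX|_\sV)^\sm,\Zn)$ is in general too strong: the Hochschild--Serre obstruction means one only obtains a section of $\R^1(\pi_{\sD})_*\Zn$ over an open of $\sD$, and a lift to $\H^1$ of the total space typically requires first passing to an \'etale cover of the base, as in Lemma~\ref{lem:ffpoints}. (Once restricted to a fibre over a finite field the lift does exist, since $\H^2(\FF_\fp,\Zn)=0$, which is all your argument ultimately uses.) Neither point undermines the strategy, but the details as written would need this correction to be airtight.
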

Furthermore, \cite[Proposition~4.3]{BBL:wa} shows that 100\% of points $P \in U(k)$ satisfy the required property, so that we obtain 100\% of $P$ for which $\spec_P(\alpha) \in \Br X_P$ is prolific at some place $\fp$.  This is sufficient to show that there is an obstruction to weak approximation on $X_P$.

However, this does not suffice two prove the absence of a Brauer--Manin obstruction to the Hasse principle, for the two reasons outlined at the beginning of this section.  Firstly, we must study more than one algebra: indeed, we have to study simultaneously all algebras arising by specialising elements of $\H^1(K, \Pic X_{\bar{\eta}})$.  Secondly, an algebra of composite order $n$ cannot in general be expected to be prolific at a single place, so we must combine information from several places.  This leads us to Proposition~\ref{prop:main} and Corollary~\ref{cor:prolific}, substantially generalising~\cite[Proposition~4.2]{BBL:wa}, the proofs of which make up the bulk of this article.  From there, we prove Theorem~\ref{thm}.


The reason that Theorem~\ref{thm} does not prove the absence of any Brauer--Manin obstruction on 100\% of $X_P$ is that individual $X_P$ may have Brauer group elements other than those obtained by specialising $\H^1(K, \Pic X_{\bar{\eta}})$.  However, if we further assume that $\Br X_{\bar{\eta}}$ is trivial, then a result of Harari~\cite{Harari:DMJ-1994} on specialisation of the algebraic Brauer group states the following: for $P$ lying in a Hilbertian subset of $\PP^m(k)$, specialisation gives an isomorphism $\H^1(K,\Pic X_{\bar{\eta}}) \isom \H^1(k, \Pic \bar{X}_P)$.  The hypothesis also implies that $\Br \bar{X}_P$ vanishes, and so the whole of $\Br X_P$ is algebraic; thus Theorem~\ref{thm} applies to the whole of $\Br X_P$ and shows that there is indeed no Brauer--Manin obstruction on $X_P$.  This is the content of Theorem~\ref{thm2}.

In Section~\ref{sec:res}, we recall the definition and properties of relative residue maps for $\H^1(K, \Pic X_{\bar{\eta}})$ from~\cite{BBL:wa}.  In Section~\ref{sec:surj}, we prove the key technical result (Proposition~\ref{prop:main}) showing that, by choosing $P$ suitably, we can ensure that the specialisation to $X_P$ of $\H^1(K, \Pic X_{\bar{\eta}})$ is prolific.  In Section~\ref{sec:sieve}, we apply a result of sieve theory from~\cite{BBL:wa} to prove that 100\% of $P \in U(k)$ satisfy all the desired conditions, proving Theorem~\ref{thm}.  In Section~\ref{sec:spec} we apply Harari's result to show that, for 100\% of $P \in \PP^m(k)$, the specialisation of $\H^1(K, \Pic X_{\bar{\eta}})$ is the whole of $\Br_1 X_P / \Br k$, thus proving Theorem~\ref{thm2}.

\section{Residue maps}\label{sec:res}

Let $Y$ be a regular scheme, and $Z$ a regular prime divisor on $Y$.  Suppose for now that the residue characteristics of $Y$ are all zero.  There are various equivalent ways to define a residue map $\partial_Z \colon \Br (Y \setminus Z) \to \H^1(Z,\QZ)$ fitting into an exact sequence
\[
0 \to \Br Y \to \Br (Y \setminus Z) \xrightarrow{\partial_Z} \H^1(Z,\QZ).
\]
One definition is to use the Kummer sequence, which gives a short exact sequence
\[
0 \to \Pic (Y \setminus Z)/ n \to \H^2(Y \setminus Z, \mmu_n) \to \Br (Y \setminus Z)[n] \to 0.
\]
There is then a residue map, which we will also denote by $\partial_Z$, from $\H^2(Y \setminus Z,\mmu_n)$ to $\H^1(Z,\Zn)$, arising from the Gysin sequence for the pair $Z \subset Y$; see, for example, \cite[Section~2]{Bright:bad} for details.  This residue map vanishes on the image of $\Pic (Y \setminus Z)/n$ and so induces a residue map on $\Br(Y \setminus Z)[n]$.  Combining these for all $n$ defines the residue map on the torsion group $\Br(Y \setminus Z)$.  Without the assumption on the residue characteristics of $Y$, the same works for any $n$ invertible on $Y$.

In the notation of Theorem~\ref{thm}, let $Z \subset X$ be a vertical prime divisor; applying the above construction to the local ring of $X$ at $Z$ allows us to construct residue maps $\Br X_\eta \to \H^1(\ff{Z},\QZ)$.  However, we are interested in classes in $\H^1(K, \Pic X_{\bar{\eta}})$, and it was shown by Uematsu~\cite{Uematsu:QJM-2014} that, in contrast to the case of varieties over a number field, the natural map $(\Br_1 X_\eta / \Br K) \to \H^1(K, \Pic X_{\bar{\eta}})$ is not necessarily surjective.  To address this problem, in~\cite{BBL:wa} we defined residue maps on $\H^1(K, \Pic X_{\bar{\eta}})$ extending those on $\Br_1 X_\eta$.  We now briefly recall the definition and properties of those residue maps.

As in the usual case, it is easier to work with finite coefficients, so we first relate $\H^1(K,\Pic X_{\bar{\eta}})$ to some cohomology groups with finite coefficients.  Let $n$ be a positive integer, and suppose that $\Pic X_{\bar{\eta}}$ has no $n$-torsion.  The multiplication-by-$n$ map gives a short exact sequence in cohomology
\[
0 \to \H^0(K, \Pic X_{\bar{\eta}})/n \to \H^0(K, \Pic X_{\bar{\eta}}/n) \to \H^1(K, \Pic X_{\bar{\eta}})[n] \to 0.
\]
On the other hand, the Kummer sequence on $X_{\bar{\eta}}$ gives an injective homomorphism $\Pic X_{\bar{\eta}}/n \to \H^2(X_{\bar{\eta}},\mmu_n)$.  So, given a class in $\H^1(K, \Pic X_{\bar{\eta}})[n]$, we may lift it to $\H^0(K, \Pic X_{\bar{\eta}}/n)$ and then push forward to $\H^0(K, \H^2(X_{\bar{\eta}},\mmu_n))$.  This gives a well-defined homomorphism
\begin{equation}\label{eq:h1h2}
\H^1(K, \Pic X_{\bar{\eta}})[n] \to \H^0(K, \H^2(X_{\bar{\eta}}, \mmu_n)) / \H^0(K, \Pic X_{\bar{\eta}})
\end{equation}
where the right-hand side, by abuse of notation, means the cokernel of the homomorphism $\H^0(K, \Pic X_{\bar{\eta}})  \to \H^0(K, \H^2(X_{\bar{\eta}}, \mmu_n))$ induced by the cycle class map.

To define the new residue maps, we again put ourselves in a slightly more general situation.  Let $f \colon Y \to B$ be a morphism of regular schemes; let $D \subset B$ be a regular prime divisor of $B$, and suppose that $Z = f^{-1}(D)$ is a regular prime divisor on $Y$.  Let $n$ be a positive integer invertible on $B$.  The Gysin sequence gives a map $(\mmu_n)_{Y \setminus Z} \to (\Zn)_Z[-1]$; applying $\R^2 f_*$ and taking global sections produces a homomorphism 
\[
\rho_D \colon \H^0(B \setminus D, \R^2 f_* \mmu_n) \to \H^0(D, \R^1 (f_Z)_* \Zn),
\]
called the \emph{relative residue map} at $D$.

In the situation of Theorem~\ref{thm}, let $d$ be a point of dimension $1$ in $\PP^m_k$, that is, $d$ is the generic point of a prime divisor on $\PP^m_k$.  Applying the relative residue construction to the local ring at $d$ gives a relative residue map
\[
\rho_d \colon \H^0(K, \H^2(X_{\bar{\eta}}, \mmu_n)) \to \H^0(\ff{d}, \H^1(X_{\bar{d}}^\sm,\Zn))
\]
where $\bar{d}$ is a geometric point lying above $d$, and $X_{\bar{d}}^\sm$ is the open subscheme of $X_{\bar{d}}$ on which $\pi$ is smooth.  Under conditions~(\ref{codim1}) and~(\ref{codim2}) of Theorem~\ref{thm}, Proposition~5.10 of~\cite{BBL:wa} shows that these residue maps fit into an exact sequence
\begin{multline*}
0 \to \H^0(B, \R^2 \pi_* \mmu_n) \to \H^0(K, \H^2(X_{\bar{\eta}}, \mmu_n)) \\\xrightarrow{\oplus \rho_d} 
\bigoplus_{d \in (\PP^m_k)^{(1)}} \H^0(\ff{d}, \H^1(X_{\bar{d}}^\sm,\Zn)).
\end{multline*}
Lemma~5.12 of~\cite{BBL:wa} shows that the image of $\H^0(K, \Pic X_{\bar{\eta}})$ in $\H^0(K, \H^2(X_{\bar{\eta}},\mmu_n))$ is contained in the kernel of every $\rho_d$, so that the homomorphism~\eqref{eq:h1h2} induces for each $d$ a well-defined residue map
\[
\H^1(K, \Pic X_{\bar{\eta}})[n] \to \H^0(\ff{d}, \H^1(X_{\bar{d}}^\sm, \Zn)), 
\]
which we will also denote $\rho_d$.  

\section{Surjectivity of evaluation maps}\label{sec:surj}

In this section we prove the main technical ingredient, Proposition~\ref{prop:main}, in the proof of Theorem~\ref{thm}.  Our Proposition~\ref{prop:main} is a generalisation of Proposition~4.2 of~\cite{BBL:wa}, and its proof follows the structure of the proof found in Section~5.6 of~\cite{BBL:wa}, though with substantial technical additions.  In this section, there is no need to assume that the base scheme is $\PP^m$, so instead of working in the context of Theorem~\ref{thm} we put ourselves in the following more general situation:
\begin{quote}
($\star$) $k$ is a number field with ring of integers $\fo$.  $\pi \colon X \to B$ is a flat, surjective morphism of smooth varieties over $k$.  If $K$ is the function field of $B$ and $\eta \colon \Spec K \to B$ is the generic point, then the generic fibre $X_\eta$ is smooth, proper and geometrically connected with torsion-free geometric Picard group.  The fibre of $\pi$ at each codimension-1 point of $B$ is geometrically integral.  The fibre of $\pi$ at each codimension-2 point of $B$ has a geometrically reduced component.
\end{quote}
Most of the time we will work with $\H^0(K, \H^2(X_{\bar{\eta}},\mmu_n))$, only later applying the results to $\H^1(K, \Pic X_{\bar{\eta}})$.  We begin by saying what it means to specialise an element of $\H^0(K, \H^2(X_{\bar{\eta}},\mmu_n))$ at a point.

Let $U \subset B$ be the dense open subset above which the fibres of $\pi$ are smooth.  
There is a natural map $\eta^* \colon \H^0(U, \R^2 \pi_* \mmu_n) \to \H^0(K, \H^2(X_{\bar{\eta}},\mmu_n))$ given by base change by $\eta \colon \Spec K \to U$;
Lemma~5.19 of~\cite{BBL:wa} states that it is an isomorphism.
Let $P$ be a point of $U(k)$.  Suppose that the $k$-variety $X_P$ is everywhere locally soluble; then Lemma~5.20 of~\cite{BBL:wa} shows that the natural map
\[
\H^2(X_P,\mmu_n) / \H^2(k,\mmu_n) \to \H^0(k, \H^2(\bar{X}_P,\mmu_n)),
\]
arising from the Hochschild--Serre spectral sequence, is an isomorphism.
We thus have a sequence of homomorphisms
\[
\begin{CD}
\H^0(K, \H^2(X_{\bar{\eta}}, \mmu_n)) \\
@A{\eta^*}A{\sim}A\\
\H^0(U,\R^2 \pi_* \mmu_n) \\
@V{P^*}VV \\
\H^0(k, \H^2(\bar{X}_P, \mmu_n)) \\
@AA{\sim}A \\
\H^2(X_P,\mmu_n)/\H^2(k,\mmu_n) \\
@VVV \\
\Br X_P[n]/\Br k[n]
\end{CD}
\]
in which both upward-pointing arrows are isomorphisms, allowing us to define, for a class $\alpha \in \H^0(K, \H^2(X_{\bar{\eta}}, \mmu_n))$, the specialisation $\spec_P(\alpha) \in \Br X_P[n] / \Br k[n]$.
Fix a place $v$ of $k$, and a base point $x_0 \in X_P(k_v)$.  Then we define an evaluation pairing
\begin{equation}\label{eq:pairbr}
\H^0(K, \H^2(X_{\bar{\eta}}, \mmu_n)) \times X_P(k_v) \to \Br k_v[n]
\end{equation}
by
\[
(\alpha, x) \mapsto \spec_P(\alpha)(x) - \spec_P(\alpha)(x_0).
\]
In the expression above, we are implicitly lifting $\spec_P(\alpha)$ to $\Br X_p[n]$, but the resulting value is independent of the choice of lift.  Composing with the local invariant map $\inv_v \colon \Br k_v \to \QZ$ gives a pairing
\begin{equation}\label{eq:pairqz}
\H^0(K, \H^2(X_{\bar{\eta}}, \mmu_n)) \times X_P(k_v) \to \QZ.
\end{equation}
This is simply the pairing obtained by composing the homomorphism $\spec_P$ with the pairing~\eqref{eq:brpairing}.
In both cases, the effect of changing the base point $x_0$ is to translate the value of the pairing by a constant.  

By a \emph{model} of $\pi$ over $\fo$ we mean an integral scheme $\sB$, separated and of finite type over $\fo$, satisfying $\sB \times_{\fo} k = B$, together with an $\fo$-morphism $\sX \to \sB$, separated and of finite type, extending $\pi$.

To simplify the statement of the following proposition, we make some more definitions.  The first is a generalisation of what it means for the specialisation of a certain subgroup to be prolific on a given fibre.

\begin{definition}
Let $A \subset \H^0(K, \H^2(X_{\bar{\eta}}, \mmu_n))$ be a finite subgroup, let $D \subset B^{(1)}$ be a finite set of points of codimension $1$ in $B$, and let $P \in U(k)$ be a point such that $X_P$ is everywhere locally soluble.  Let $S$ be a finite set of places of $\fo$.
We say that $A$ is \emph{$D$-prolific at $S$ on the fibre $X_P$} if the image of the map 
\[
\prod_{v \in S} X_P(k_v) \to \dual{A}
\]
defined by the sum of the pairings~\eqref{eq:pairqz} contains the image of the map 
\[
\bigoplus_{d \in D}\dual{\H^0(\ff{d},\H^1(X_{\bar{d}}^\sm, \Zn))} \to \dual{A}
\]
induced by $\oplus_{d \in D} \rho_d$.
\end{definition}

In particular, if $\oplus \rho_d \colon A \to \bigoplus_{d \in D} \H^0(\ff{d},\H^1(X_{\bar{d}}^\sm, \Zn))$ happens to be injective, then saying that $A$ is $D$-prolific at $S$ on the fibre $X_P$ is the same as saying that $\spec_P(A) \subset (\Br X_P[n]/\Br k[n])$ is prolific at $S$.

The second definition encapsulates what it means for a point $P \in U(k)$ to land in a given divisor modulo some prime, in a suitably generic way. 

\begin{definition}
Fix a model $\sX \to \sB$ of $\pi$.  Let $d \in B$ be a point of codimension $1$ not contained in $U$, let $\sD$ be the Zariski closure of $d$ in $\sB$ (which is a prime divisor on $\sB$) and let $\sV$ be a dense open subset of $\sD$. 
We define the set $T(U,\sV)$ to be the set of points $P \in U(k)$ such that the Zariski closure of $P$ in $\sB$ meets $\sV$ transversely in at least one point.
\end{definition}

Note that, for dimension reasons, the Zariski closure of a point $P \in T(U,\sV)$ meets $\sV$ in finitely many closed points of $\sB$.  A closed point of $\sB$ has finite residue field, that is, it lies in the fibre of $\sB$ above one of the primes of $\fo$.  The definition simply requires that at least one of these intersection points be transverse.

We can now state the proposition.

\begin{proposition}\label{prop:main}
Let $A \subset \H^0(K, \H^2(X_{\bar{\eta}}, \mmu_n))$ be a finite subgroup, and let $d \in B$ be a point of codimension $1$ not contained in $U$.  Let $\sX \to \sB$ be a model of $\pi$ over $\fo$.  Denote by $\sD$ the Zariski closure of $d$ in $\sB$.  Then there exists a dense open subset $\sV \subset \sD$ such that, for all $P \in T(U,\sV)$ such that $X_P$ is everywhere locally soluble,
if we let $\fp$ be a prime of $\fo$ above which the Zariski closure of $P$ meets $\sV$ transversely, then
 the subgroup $A$ is $\{d\}$-prolific at $\{\fp\}$ on the fibre $X_P$.
\end{proposition}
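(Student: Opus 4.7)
The plan is to generalise the proof of~\cite[Proposition~4.2]{BBL:wa} from a single Brauer class to the entire finite subgroup $A$. Write $A' = \rho_d(A) \subset \H^0(\ff{d}, \H^1(X_{\bar d}^\sm, \Zn))$. Since for finite abelian groups the image of a dual is the dual of the image, the image appearing in the definition of $\{d\}$-prolific is simply $\dual{A'} \hookrightarrow \dual{A}$, the dual of the surjection $A \twoheadrightarrow A'$. The goal is therefore to produce a dense open $\sV \subset \sD$ such that, for every $P \in T(U,\sV)$ with $X_P$ everywhere locally soluble and every prime $\fp$ at which the Zariski closure of $P$ meets $\sV$ transversely, the image of the evaluation pairing~\eqref{eq:pairqz} in $\dual{A}$ contains $\dual{A'}$.

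For the choice of $\sV$, I would fix a model $\sX \to \sB$ over $\fo$ on which $n$ is invertible, then shrink $\sV \subset \sD$ so that the following hold: (i) the morphism $\sX^\sm|_\sV \to \sV$ is smooth with geometrically integral fibres, which is possible by hypotheses~(\ref{codim1}) and~(\ref{codim2}); (ii) by smooth base change, the specialisation $\H^1(X_{\bar d}^\sm, \Zn) \to \H^1(\sX_{\bar s}^\sm, \Zn)$ is an isomorphism at every $s \in \sV$, so that $A'$ lifts canonically to a subgroup of $\H^1(\sX^\sm|_\sV, \Zn)$; (iii) the finite \'etale abelian cover $\tilde\sX \to \sX^\sm|_\sV$ of Galois group $\dual{A'}$ classified by this lift is geometrically integral fibrewise over $\sV$; (iv) $\sV$ avoids the finitely many closed points $s$ whose residue field fails to meet the Lang--Weil threshold for $\tilde\sX$ described below.

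Now take $P$ and $\fp$ as in the statement, and let $s$ be the transverse intersection point over $\fp$; transversality forces $\ff{s} = \ff{\fp}$ and makes a uniformiser of $\fo_\fp$ pull back to a local equation of $\sD$ at $s$. The base change $\sX_P := \sX \times_\sB \Spec \fo_\fp$ is a model of $X_P$ with special fibre $\sX_s$, and by functoriality of the relative residue construction of Section~\ref{sec:res} the residue of $\spec_P(\alpha) \in \Br X_P[n]$ along this special fibre coincides with the specialisation of $\rho_d(\alpha)$ in $\H^0(\ff{s}, \H^1(\sX_{\bar s}^\sm, \Zn))$. Combined with the formula of~\cite{Bright:bad} for the evaluation of a tame Brauer class at a local point of bad reduction, this gives, for any $\alpha \in A$ and any $x, x_0 \in X_P(k_\fp)$ reducing to $\bar x, \bar x_0 \in \sX_s^\sm(\ff{\fp})$,
\[
\inv_\fp \spec_P(\alpha)(x) - \inv_\fp \spec_P(\alpha)(x_0) = \bar x^* \rho_d(\alpha) - \bar x_0^* \rho_d(\alpha) \in \H^1(\ff{\fp}, \Zn) = \tfrac{1}{n}\ZZ/\ZZ.
\]
Since Hensel's lemma makes $X_P(k_\fp) \to \sX_s^\sm(\ff{\fp})$ surjective onto points reducing into the smooth locus, the pairing~\eqref{eq:pairqz} factors as
\[
X_P(k_\fp) \twoheadrightarrow \sX_s^\sm(\ff{\fp}) \longrightarrow \dual{A'} \hookrightarrow \dual{A},
\]
the middle arrow being $\bar x \mapsto (\beta \mapsto \bar x^*\beta - \bar x_0^*\beta)$.

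It therefore remains to show that the middle arrow $\sX_s^\sm(\ff{\fp}) \to \dual{A'}$ is surjective. This is an instance of geometric Chebotarev over a finite field: the lift of $A'$ classifies the finite \'etale cover $\tilde\sX_s \to \sX_s^\sm$ of Galois group $\dual{A'}$ obtained by specialising $\tilde\sX$ at $s$, and the map in question sends a point to its Frobenius substitution. By step~(iii) the cover $\tilde\sX_s$ is geometrically integral, so Lang--Weil yields a $\ff{\fp}$-point in each of its $|\dual{A'}|$ cosets as soon as $|\ff{\fp}|$ exceeds an effective bound depending only on $\tilde\sX \to \sX^\sm|_\sV$; step~(iv) ensures that every closed point of $\sV$ meets this bound. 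The principal obstacle is step~(iii): whereas~\cite{BBL:wa} need only control a single cyclic cover attached to an individual class, here one must produce a single, possibly non-cyclic cover $\tilde\sX$ over $\sX^\sm|_\sV$ capturing the entire subgroup $A'$ at once and remaining geometrically integral generically, so that Lang--Weil applies uniformly to all transverse fibres of $\sV$.
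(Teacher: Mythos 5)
Your approach is essentially the same as the paper's, and you correctly identify the crucial new idea: replace the single cyclic cover attached to one class by a single, possibly non-cyclic cover with Galois group $\dual{A'}$ capturing the whole of $A' = \rho_d(A)$ at once, and apply Lang--Weil to that cover. You also correctly identify that the argument rests on your step~(iii) --- arranging the cover $\tilde\sX \to \sX^\sm|_\sV$ to have geometrically integral fibres --- but you flag it as the ``principal obstacle'' and leave it unresolved. That is the one genuine gap.

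The paper fills exactly this gap with Lemma~\ref{lem:torsors}, which establishes a canonical isomorphism $\H^1(T, \dual{G}) \cong \Hom(G, \H^1(T,\QZ))$ for any finitely generated abelian group $G$, under which \emph{connected} torsors correspond precisely to \emph{injective} homomorphisms $G \to \H^1(T,\QZ)$ (with a sheafified version giving functoriality over $\sD$). Since $A' = \rho_d(A)$ is by definition a subgroup of $\H^1(X_{\bar d}^\sm, \Zn) \subset \H^1(X_{\bar d}^\sm, \QZ)$, the inclusion is automatically injective, so the associated $\dual{A'}$-torsor over the geometric generic fibre $X_{\bar d}^\sm$ is connected with no shrinking required at that stage; spreading out then makes your condition~(iii) achievable on a dense open. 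Concretely, the paper packages this spreading-out step and the Lang--Weil input as Lemma~\ref{lem:ffpoints} (a direct generalisation of~\cite[Lemma~5.21]{BBL:wa} from $\Zn$ to arbitrary finite coefficient groups), and the fibrewise surjectivity you want for the middle arrow $\sX^\sm_s(\ff{\fp}) \to \dual{A'}$ is deduced by the twisting argument of Corollary~\ref{cor:ffsurj} rather than by a geometric Chebotarev count. This last is a minor technical divergence: you ask for geometric integrality of each fibre $\tilde\sX_{\bar s}$ and apply Lang--Weil fibre by fibre, while the paper argues from the integrality of the total cover $Y$ over the model and invokes~\cite[Lemma~3.6]{BBL:wa} once; both routes are valid, but the paper's avoids having to track fibrewise geometric connectedness directly. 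Everything else in your proposal --- the reduction to $\dual{A'} \hookrightarrow \dual{A}$, the factorisation of the pairing~\eqref{eq:pairqz} through $\sX_s^\sm(\ff{\fp})$ via the residue formula and Hensel's lemma --- matches the paper's argument.
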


Before proving Proposition~\ref{prop:main}, we deduce some corollaries.  The first is an immediate generalisation to multiple $d$.

\begin{corollary}\label{cor:manyp}
Let $A$ be a finite subgroup of $\H^0(K, \H^2(X_{\bar{\eta}}, \mmu_n))$, and $d_1, \dotsc, d_r$ be finitely many distinct points of codimension $1$ in $B$, not contained in $U$.  For each $i=1, \dotsc, r$, let $\sV_i \subset \sD_i$ be the corresponding dense open set given by Proposition~\ref{prop:main}, and shrink the $\sV_i$ if necessary so that they are disjoint.  
Let $P \in U(k)$ lie in the intersection $T(U,\sV_1) \cap \dotsb \cap T(U,\sV_r)$, suppose that $X_P$ is everywhere locally soluble, and let $\fp_i$ be a prime of $\fo$ above which the Zariski closure of $P$ meets $\sV_i$ transversely.
Then $A$ is $\{d_1, \dotsc, d_r\}$-prolific at $\{\fp_1, \dotsc, \fp_r\}$ on the fibre $X_P$.
\end{corollary}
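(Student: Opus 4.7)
The plan is to deduce this corollary from Proposition~\ref{prop:main} by a pure linearity argument, once we check that the chosen primes $\fp_1,\dots,\fp_r$ are forced to be distinct.

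The first step is to verify distinctness. Since $\sB$ is separated over $\fo$ and $P\in B(k)$ is a single rational point, its Zariski closure $\bar P$ in $\sB$ meets each closed fibre $\sB_\fp$ in at most one closed point: over the open subset of $\Spec\fo$ on which $P$ extends, $\bar P$ is a section, and sections into a separated scheme agreeing generically must agree everywhere. If two indices $i\neq j$ gave $\fp_i=\fp_j$, the common intersection point of $\bar P$ with the fibre above that prime would lie in both $\sV_i$ and $\sV_j$, contradicting the disjointness that we explicitly arranged.

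The second step is to apply Proposition~\ref{prop:main} individually. For each $i$, the data $(A,d_i,\sX\to\sB)$ produced $\sV_i\subset\sD_i$, and the hypothesis $P\in T(U,\sV_i)$ together with local solubility of $X_P$ yields that $A$ is $\{d_i\}$-prolific at $\{\fp_i\}$ on $X_P$. Unwinding the definition, after fixing base points $x_0^{(i)}\in X_P(k_{\fp_i})$, this says that for every functional $\psi_i\in\dual{\H^0(\ff{d_i},\H^1(X_{\bar{d_i}}^\sm,\Zn))}$ there exists $x_i\in X_P(k_{\fp_i})$ such that the pairing~\eqref{eq:pairqz} at $\fp_i$ sends $\alpha\in A$ to $\psi_i(\rho_{d_i}(\alpha))$.

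The last step is to combine. A general element in the image of $\bigoplus_i\dual{\H^0(\ff{d_i},\H^1(X_{\bar{d_i}}^\sm,\Zn))}\to\dual{A}$ is the functional $\alpha\mapsto\sum_i\psi_i(\rho_{d_i}(\alpha))$ for some tuple $(\psi_i)$. Choose $x_i\in X_P(k_{\fp_i})$ as above; the distinctness of the $\fp_i$ means that summing the individual identities over $i$ shows that $(x_1,\dots,x_r)\in\prod_i X_P(k_{\fp_i})$ maps under the sum of the pairings~\eqref{eq:pairqz} at $\{\fp_1,\dots,\fp_r\}$ to precisely this functional, establishing $\{d_1,\dots,d_r\}$-prolificity at $\{\fp_1,\dots,\fp_r\}$. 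The substantive work all lies in Proposition~\ref{prop:main}; the main obstacle for the corollary is only the bookkeeping point that the primes have to be pairwise distinct, which is the sole reason for shrinking the $\sV_i$ so that they are disjoint.
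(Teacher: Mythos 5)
Your proof is correct and follows essentially the same approach as the paper: verify that the primes $\fp_i$ are pairwise distinct (using disjointness of the $\sV_i$ and the fact that the closure of $P$ passes through each special fibre of $\sB$ in at most one point), apply Proposition~\ref{prop:main} to each $(A,d_i)$ separately, and then conclude by additivity of the pairing. The paper's proof is terser but identical in substance.
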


\begin{proof}
The Zariski closure of $P$ is an $\fo$-point of $\sB$ and so meets the fibre at any prime $\fp$ in only one point (the ``reduction of $P$ modulo $\fp$'').
Because the $\sV_i$ are disjoint, this means that the $\fp_i$ in the statement of the corollary are distinct.
Proposition~\ref{prop:main} shows that the image of $X_P(\fp_i) \to \dual{A}$ contains the image of $\dual{\H^0(\ff{d_i},\H^1(X_{\bar{d_i}}^\sm, \Zn))}$, so the image of the sum of these maps contains
the sum of those images.
\end{proof}

In the second corollary, we replace $\H^0(K,\H^2(X_{\bar{\eta}},\mmu_n))$ by $\H^1(K,\Pic X_{\bar{\eta}})$.
Recall from Section~\ref{sec:res} that composition with the homomorphism~\eqref{eq:h1h2} gives well-defined residue maps
\begin{equation}\label{eq:rhopic}
\rho_d \colon \H^1(K, \Pic X_{\bar{\eta}})[n] \to \H^0(\ff{d}, \H^1(X_{\bar{d}}^\sm, \Zn)).
\end{equation}

Similarly, the specialisation map $\spec_P$ also lifts, by composition with the homomorphism~\eqref{eq:h1h2}, to a homomorphism
\begin{equation}\label{eq:specpic}
\spec_P \colon \H^1(K, \Pic X_{\bar{\eta}})[n] \to \Br X_P[n] / \Br k[n].
\end{equation}
To see this we must show that the image of $\H^0(K, \Pic X_{\bar{\eta}})$ in $\H^0(K, \H^2(X_{\bar{\eta}},\mmu_n))$ lies in the kernel of $\spec_P$.  If $\alpha$ lies in this image, then the specialisation of $\alpha$ in $\H^2(X_P,\mmu_n)/\H^2(k,\mmu_n)$ will lie in the image of $\Pic X_P$, and so will give $0$ when mapped into $\Br X_P[n]/\Br k[n]$, as claimed.  

\begin{corollary}\label{cor:prolific}
Let $A$ be a finite subgroup of $\H^1(K, \Pic X_{\bar{\eta}})[n]$.
Suppose that the homomorphism
\[
A \xrightarrow{\oplus_d \rho_d} \bigoplus_{d \in B^{(1)}} \H^0(\ff{d},\H^1(X_{\bar{d}}^\sm, \Zn))
\]
is injective.  Let $d_1, \dotsc, d_r$ be the finitely many distinct points of codimension $1$ in $B$ satisfying $\rho_{d_i}(A) \neq 0$.  For each $i=1, \dotsc, r$, let $\sV_i \subset \sD_i$ be the corresponding dense open set given by Proposition~\ref{prop:main}, and shrink the $\sV_i$ if necessary so that they are disjoint.
Let $P \in U(k)$ lie in $T(U,\sV_1) \cap \dotsb \cap T(U,\sV_r)$, suppose that $X_P$ is everywhere locally soluble, and let $\fp_i$ be a prime of $\fo$ above which the Zariski closure of $P$ meets $\sV_i$ transversely.
Then the group $\spec_P(A) \subset (\Br X_P[n] / \Br k[n])$ is prolific at $S = \{ \fp_i : i = 1, \dotsc, r \}$, and so gives no Brauer--Manin obstruction to the existence of rational points on $X_P$.
\end{corollary}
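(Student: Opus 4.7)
The plan is to deduce this corollary from Corollary~\ref{cor:manyp} by lifting $A$ to a finite subgroup of $\H^0(K, \H^2(X_{\bar{\eta}}, \mmu_n))$ and transferring the conclusion through a duality argument. First I would choose a finite generating set of $A$; each generator lifts, via a set-theoretic section of the surjection $\H^0(K, \Pic X_{\bar{\eta}}/n) \twoheadrightarrow \H^1(K, \Pic X_{\bar{\eta}})[n]$ followed by the Kummer map $\Pic X_{\bar{\eta}}/n \hookrightarrow \H^2(X_{\bar{\eta}}, \mmu_n)$, to an element of the $n$-torsion of $\H^0(K, \H^2(X_{\bar{\eta}}, \mmu_n))$. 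Because these lifts are $n$-torsion, the subgroup $\tilde A$ they generate is finite; and by construction the map~\eqref{eq:h1h2} identifies the image of $\tilde A$ in $\H^0(K, \H^2(X_{\bar{\eta}}, \mmu_n))/\H^0(K, \Pic X_{\bar{\eta}})$ with $A$, giving a surjection $q \colon \tilde A \twoheadrightarrow A$. By the very definitions~\eqref{eq:rhopic} and~\eqref{eq:specpic} recalled above, the residue maps $\rho_d$ and the specialisation $\spec_P$ on $\tilde A$ both factor through $q$, and so therefore does the evaluation pairing~\eqref{eq:pairqz}.

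Next I would apply Corollary~\ref{cor:manyp} to $\tilde A$ and the divisors $d_1, \dotsc, d_r$: this is legitimate because the residues of $\tilde A$ at any $d \notin \{d_1, \dotsc, d_r\}$ vanish (they factor through $A$, where they vanish by hypothesis). The conclusion is that $\tilde A$ is $\{d_1, \dotsc, d_r\}$-prolific at $\{\fp_1, \dotsc, \fp_r\}$ on $X_P$, meaning that the image of $\prod_i X_P(k_{\fp_i}) \to \dual{\tilde A}$ contains the image of $\bigoplus_i \dual{\H^0(\ff{d_i}, \H^1(X^{\sm}_{\bar{d_i}}, \Zn))} \to \dual{\tilde A}$ induced by $\oplus\rho_{d_i}$. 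Since both maps factor through the injective dual $\dual q \colon \dual A \hookrightarrow \dual{\tilde A}$, pulling back along $\dual q$ yields exactly the same containment with $\dual{\tilde A}$ replaced by $\dual A$ and with the residues and pairings now referring to $A$.

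The injectivity hypothesis guarantees that $\bigoplus_{i=1}^{r} \rho_{d_i} \colon A \to \bigoplus_i \H^0(\ff{d_i}, \H^1(X^{\sm}_{\bar{d_i}}, \Zn))$ is injective, so by duality $\bigoplus_i \dual{\H^0(\ff{d_i}, \H^1(X^{\sm}_{\bar{d_i}}, \Zn))} \to \dual A$ is surjective. Combining with the previous paragraph, the image of $\prod_i X_P(k_{\fp_i}) \to \dual A$ is all of $\dual A$. Since this evaluation pairing on $A$ factors through $\spec_P(A) \subset \Br X_P[n]/\Br k[n]$, its image is contained in $\dual{\spec_P(A)} \subseteq \dual A$; forcing this containment to be an equality gives both that $\spec_P$ is injective on $A$ and that $\spec_P(A)$ is prolific at $S = \{\fp_1, \dotsc, \fp_r\}$. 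The final assertion about the absence of a Brauer--Manin obstruction is then the extension of~\cite[Proposition~7.3]{Bright:bad} noted immediately after the definition of prolificacy. The main obstacle I expect is the bookkeeping around the lift $\tilde A$: verifying that it can be chosen finite (which rests on the $n$-torsion of the Kummer lifts) and that the residue and specialisation maps on $\tilde A$ descend compatibly to $A$ (via Lemma~5.12 of~\cite{BBL:wa} and the vanishing of $\Pic X_P$ in $\Br X_P/\Br k$). Once this is in place, the deduction from Corollary~\ref{cor:manyp} is purely formal.
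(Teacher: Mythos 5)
Your proof follows the same strategy as the paper's: lift $A$ from $\H^1(K, \Pic X_{\bar{\eta}})[n]$ to a finite subgroup $\tilde{A}$ of $\H^0(K, \H^2(X_{\bar{\eta}}, \mmu_n))$, apply Corollary~\ref{cor:manyp}, and descend through duality. Your treatment of the lift $\tilde{A}$ is in fact more explicit than the paper's (which merely notes that $\rho_d$ and $\spec_P$ ``factor through'' $\H^0(K, \H^2(X_{\bar{\eta}}, \mmu_n))$), and the dualization argument at the end is correct.

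There is, however, one genuine gap. To invoke Proposition~\ref{prop:main} and Corollary~\ref{cor:manyp} at the points $d_1, \dotsc, d_r$, those points must \emph{not lie in $U$}: this is a standing hypothesis of Proposition~\ref{prop:main}, and the statement of Corollary~\ref{cor:prolific} only characterises the $d_i$ as the codimension-$1$ points with $\rho_{d_i}(A) \neq 0$, without a priori excluding them from $U$. The paper opens its proof by checking precisely this: since $\Pic X_{\bar{\eta}}$ is torsion-free, $\H^1(X_{\bar{\eta}}, \mmu_n) = 0$, and proper-smooth base change then gives $\H^1(X_{\bar{s}}^{\sm},\Zn) = \H^1(X_{\bar{s}},\Zn) = 0$ for every geometric point $\bar{s}$ of $U$, so $\rho_d$ is identically zero for any $d \in U$; hence $\rho_{d_i}(A) \neq 0$ forces $d_i \notin U$. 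Your parenthetical justification for ``legitimacy'' of applying Corollary~\ref{cor:manyp} --- that residues at $d$ outside $\{d_1,\dotsc,d_r\}$ vanish --- addresses a condition that Corollary~\ref{cor:manyp} does not impose, and misses the condition that it does. With the missing verification inserted, your argument is complete and matches the paper's.
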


\begin{proof}
Firstly, none of the $d_i$ lie in $U$, for the following reason.  By assumption the geometric generic fibre $X_{\bar{\eta}}$ has torsion-free Picard group, and so we have $\H^1(X_{\bar{\eta}}, \Zn) = \H^1(X_{\bar{\eta}}, \mmu_n) = 0$; by proper-smooth base change, the group $\H^1(X_{\bar{s}}^\sm, \Zn) = \H^1(X_{\bar{s}}, \Zn)$ is zero for all geometric points $\bar{s} \in U$.

Now applying Corollary~\ref{cor:manyp}, and remembering that the homomorphisms~\eqref{eq:rhopic} and~\eqref{eq:specpic} both factor through $\H^0(K, \H^2(X_{\bar{\eta}}, \mmu_n))$, we find that the image of
\begin{equation}\label{eq:im1}
\prod_{v \in S} X_P(k_v) \to \dual{(\spec_P(A))} \to \dual{A},
\end{equation}
where the first map is defined by the pairing~\eqref{eq:evalS} and the second map is induced by $\spec_P$,
contains the image of the homomorphism
\begin{equation}\label{eq:im2}
\dual{\bigoplus_{d \in B^{(1)}} \H^0(\ff{d},\H^1(X_{\bar{d}}^\sm, \Zn))} \to \dual{A}
\end{equation}
induced by $\oplus_d \rho_d$.
By assumption, \eqref{eq:im2} is surjective, and so~\eqref{eq:im1} is as well.  Since $\spec_P(A)$ is a quotient of $A$, it follows that $\prod_{v \in S} X_P(k_v) \to \dual{(\spec_P(A))}$ is also surjective, that is, $\spec_P(A)$ is prolific at $S$.
\end{proof}

To begin the proof of Proposition~\ref{prop:main}, we establish some lemmas.
The first introduces a method of associating a torsor to a finite subgroup of $\H^1(T,\QZ)$, for a scheme $T$.  In~\cite[Section~5.3]{Bright:bad}, this was approached by choosing a basis for a subgroup and then taking the fibre product of the corresponding torsors.  The following lemma gives a coordinate-free way of achieving the same result.  It can be viewed as a generalisation of the fact that $\H^1(T,\Zn)$ is canonically the $n$-torsion subgroup of $\H^1(T,\QZ)$.  This lemma is key to applying the ideas of~\cite{BBL:wa} to several Brauer classes at once.

\begin{lemma}\label{lem:torsors}
Let $G$ be a finitely generated Abelian group.
\begin{enumerate}
\item For any scheme $T$, there is a natural isomorphism
\[
\H^1(T,\dual{G}) \to \Hom(G, \H^1(T,\QZ)).
\]
If $T$ is connected then, under this isomorphism, the classes in $\H^1(T,\dual{G})$ represented by connected torsors correspond to the injective homomorphisms $G \to \H^1(T,\QZ)$.
\item For every morphism of schemes $f \colon S \to T$, there is a natural isomorphism
\[
\R^1 f_* \dual{G} \to \sHom_{\catsh(T)}(G, \R^1 f_* \QZ).
\]
\end{enumerate}
\end{lemma}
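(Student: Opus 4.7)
The plan is first to construct the map in part (1) and reduce to cyclic $G$, then derive part (2) by the same recipe at the level of sheaves, and finally address the connected-torsors assertion.

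For part (1), the evaluation pairing $G \otimes \dual{G} \to \QZ$ gives, for each $g \in G$, a homomorphism $\mathrm{ev}_g \colon \dual{G} \to \QZ$ of constant sheaves, and hence an induced map $(\mathrm{ev}_g)_* \colon \H^1(T, \dual{G}) \to \H^1(T, \QZ)$.  Assembling these over $g \in G$ yields a homomorphism
\[
\Phi_T \colon \H^1(T, \dual{G}) \to \Hom(G, \H^1(T, \QZ)), \qquad \alpha \mapsto \big( g \mapsto (\mathrm{ev}_g)_*(\alpha) \big),
\]
natural in $T$ and contravariant-additive in $G$.  Since both $\dual{(-)}$ and $\Hom(-, \H^1(T, \QZ))$ are additive, I would use the structure theorem for finitely generated abelian groups to reduce the assertion that $\Phi_T$ is an isomorphism to the cyclic cases $G=\ZZ$ and $G=\ZZ/n$.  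For $G=\ZZ$, $\dual{G}=\QZ$ and $\Phi_T$ becomes the identity on $\H^1(T, \QZ)$; for $G=\ZZ/n$, the map $\mathrm{ev}_1 \colon \dual{(\ZZ/n)} \to \QZ$ is the natural inclusion of $\ZZ/n$ into $\QZ$, and $\Phi_T$ is precisely the identification of $\H^1(T, \ZZ/n)$ with the $n$-torsion subgroup of $\H^1(T, \QZ)$ already recalled in the introduction.

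Part (2) is proved by exactly the same construction, with $\R^1 f_*$ in place of $\H^1(T,-)$: sheafifying the presheaf version of $\Phi$ gives the desired morphism $\R^1 f_* \dual{G} \to \sHom_{\catsh(T)}(G, \R^1 f_* \QZ)$, and the same reduction to cyclic $G$ applies, using $\R^1 f_* (\ZZ/n) \cong (\R^1 f_* \QZ)[n]$ in place of the corresponding identification on $\H^1$.

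For the connected-torsors assertion, assume $T$ is connected and fix a geometric basepoint $\bar{t}$.  A class $\alpha \in \H^1(T, \dual{G})$ corresponds to a continuous homomorphism $\rho \colon \pi_1^{\et}(T, \bar{t}) \to \dual{G}$, and the associated torsor is connected exactly when $\rho$ has dense image.  Unwinding $\Phi_T$ identifies the corresponding homomorphism $\phi = \Phi_T(\alpha)$ with the assignment $g \mapsto \mathrm{ev}_g \circ \rho$, viewed inside $\Hom(G, \Hom^{\mathrm{cts}}(\pi_1^{\et}(T, \bar{t}), \QZ)) = \Hom(G, \H^1(T, \QZ))$; this is precisely the Pontryagin dual of $\rho$.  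Since $\QZ$ is a dualising module for finite abelian groups, $\rho$ has dense image if and only if its dual $\phi$ is injective.  I expect this final translation to be the main technical obstacle: the statement is classical when $G$ (and hence $\dual{G}$) is finite, but for finitely generated $G$ with a free part some care is needed with the topology on $\dual{G}$ and with the meaning of ``connected'' for $\dual{G}$-torsors.  For the applications in this paper only finite subgroups $G$ of $\H^1(K, \Pic X_{\bar{\eta}})[n]$ occur, so classical finite Pontryagin duality suffices.
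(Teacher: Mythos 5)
Your proof is correct, but the route differs from the paper's in an interesting way, particularly in part (1). You construct $\Phi_T$ via the evaluation pairing and then reduce to cyclic $G$ by additivity and the structure theorem, checking $G=\ZZ$ and $G=\ZZ/n$ by hand. The paper instead works directly with the \'etale fundamental group: for $T$ connected it writes out the chain
\[
\H^1(T,\dual{G}) = \Hom(\pi_1(T),\Hom(G,\QZ)) = \Hom(G,\Hom(\pi_1(T),\QZ)) = \Hom(G,\H^1(T,\QZ)),
\]
using the tensor--Hom adjunction, and obtains the connected-torsor statement at the same stroke (connected torsors $\leftrightarrow$ surjections $\pi_1(T)\to\dual{G}$ $\leftrightarrow$ injections $G\to\dual{\pi_1(T)}$). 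Your structure-theorem reduction is more elementary for establishing the isomorphism, but you then have to reinstate the $\pi_1$ picture anyway to handle the connected-torsor clause, so you end up invoking the same tool twice; the paper's argument is more economical because it gets both statements from a single identification. Your reduction to cyclic $G$ does require, and you should make explicit, that $\Phi_T$ respects direct-sum decompositions, which follows from naturality of $\mathrm{ev}_g$ in $G$; this is routine but is the load-bearing step.

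For part (2) you are slightly cavalier: you say ``sheafifying the presheaf version of $\Phi$ gives the desired morphism,'' but the sheafification of the presheaf $T'\mapsto\Hom(G,\H^1(S\times_T T',\QZ))$ is not literally $\sHom_{\catsh(T)}(G,\R^1 f_*\QZ)$ --- one first has to map into the latter (which is already a sheaf) and then invoke the universal property of sheafification on the source. The paper writes this out carefully, identifying the presheaf with $\sHom_{\catpsh(T)}(G^p,H)$ and then using adjunction. After that, you check the isomorphism by cyclic reduction and $\R^1 f_*(\ZZ/n)\cong(\R^1 f_*\QZ)[n]$ (valid since $f_*\QZ$ is $n$-divisible, as $\QZ$ is), whereas the paper checks it on stalks using that $\Hom(G,-)$ commutes with filtered colimits for finitely generated $G$. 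Both verifications are fine. Your remark that the connected-torsor assertion is cleanest for finite $G$, and that only finite subgroups occur in the application, is accurate and is a subtlety the paper glosses over.
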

\begin{proof}
Firstly, suppose that $T$ is a connected scheme.  We have natural isomorphisms
\begin{multline*}
\H^1(T,\dual{G}) = \Hom(\pi_1(T),\Hom(G,\QZ)) \\ = \Hom(G, \Hom(\pi_1(T),\QZ)) = \Hom(G,\H^1(T,\QZ)).
\end{multline*}
It is a standard fact that connected torsors correspond to surjective homomorphisms $\pi_1(T) \to \dual{G}$; by duality these correspond to injective homomorphisms $G \to \dual{\pi_1(T)}$, proving the statement.
If $T$ is not connected, then the isomorphism is obtained by working separately on each connected component.

Now let $f \colon S \to T$ be a morphism.  To avoid confusion, let $G^p$ and $G^s$ denote the constant presheaf and sheaf, respectively, on $T$ defined by $G$.  Let $H$ denote the presheaf on $T_\et$ defined by $H(T') = \H^1(S \times_T T', \QZ)$.
The sheaf $\R^1 f_* \dual{G}$ is the sheafification of the presheaf $F$ on $T_\et$ defined by
\begin{align*}
F(T') &= \H^1(S \times_T T', \dual{G}) \\
&= \Hom(G, \H^1(S \times_T T', \QZ)) & \text{by (1)} \\
&= \Hom(G, H(T')) \\
&= \Hom_{\catpsh(T')}(G^p|_{T'}, H|_{T'})
\end{align*}
the last because the constant presheaf functor is left adjoint to the global sections functor.  By definition of the internal Hom in $\catpsh(T)$, this says $F$ is equal to $\sHom_{\catpsh(T)}(G^p, H)$.  The natural map from $H$ to its sheafification $\R^1 f_* \QZ$ gives a morphism of presheaves 
\[
\psi \colon F \to \sHom_{\catpsh(T)}(G^p, \R^1 f_* \QZ) = \sHom_{\catsh(T)}(G^s, \R^1 f_* \QZ).
\]
The target is a sheaf, so by the universal property of sheafification we obtain a natural morphism of sheaves $\phi \colon \R^1 f_* \dual{G} \to \sHom_{\catsh(T)}(G^s, \R^1 f_* \QZ)$.  To show that $\phi$ is an isomorphism, it is enough to show that $\psi$ induces isomorphisms on stalks.  As $G$ is finitely generated, taking Homs out of $G$ commutes with taking stalks (see \cite[II.3.20]{Milne:EC}), giving the result.
\end{proof}

The following lemma is a straightforward generalisation of Lemma~5.21 of~\cite{BBL:wa}, replacing $\Zn$ by any finite Abelian group.  For a prime ideal $\fp \subset \fo$, the residue field $\fo/\fp$ is denoted by $\FF_\p$.
\begin{lemma}\label{lem:ffpoints}
Let $k$ be a number field, $\fo$ the ring of integers of $k$ and let $\pi \colon Z \to S$ be a dominant morphism of normal, integral, separated $\fo$-schemes of finite type.  Denote by $\eta$ the generic point of $S$ and $\bar{\eta}$ a geometric point lying over $\eta$.  Suppose that $Z_{\bar{\eta}}$ is integral.  Let $G$ be a finite Abelian group and let $\gamma$ be a class in $\H^0(S, \R^1 \pi_* G)$.  For a geometric point $\bar{s} \in S$, let $Y_\gamma(\bar{s})$ denote the torsor over $Z_{\bar{s}}$ defined by the restriction of $\gamma$ to $\H^1(Z_{\bar{s}},G)$.  Suppose that $Y_\gamma(\bar{\eta})$ is connected.  Then there are dense open subsets $S' \subset S$ and $U \subset \Spec \fo$ such that, for every $\fp \in U$ and any $\bar{s} \in S'(\bar{\FF}_\fp)$, any $\FF_\fp$-variety geometrically isomorphic to $Y_\gamma(\bar{s})$ has an $\FF_\fp$-rational point.
\end{lemma}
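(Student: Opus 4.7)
The plan is to adapt the proof of \cite[Lemma~5.21]{BBL:wa}, which handles the case $G = \Zn$, to an arbitrary finite Abelian group $G$. The argument has three stages: realise $\gamma$ as an honest $G$-torsor on a suitable open, ensure geometric integrality of the relevant fibres, and apply Lang--Weil uniformly.

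To realise $\gamma$, I use the Leray spectral sequence $\H^p(S, \R^q\pi_* G) \Rightarrow \H^{p+q}(Z, G)$. The obstruction to lifting $\gamma \in \H^0(S, \R^1\pi_* G)$ to a class in $\H^1(Z, G)$ lies in $\H^2(S, G)$, and dies after pullback along a suitable surjective étale cover $T \to S_0$ over a dense open $S_0 \subset S$. Replacing $Z \to S$ by $Z_T := Z \times_S T \to T$, the pullback of $\gamma$ is represented by an actual $G$-torsor $\sY \to Z_T$, with the property that for every geometric point $\bar t$ of $T$ lying above $\bar s \in S_0$, the geometric fibre $\sY_{\bar t}$ is isomorphic to $Y_\gamma(\bar s)$ as a $G$-torsor over $Z_{\bar s}$.

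The fibre $\sY_{\bar{\eta}_T}$ coincides with $Y_\gamma(\bar\eta)$, which is connected by hypothesis; being finite étale over the integral base $Z_{\bar{\eta}_T}$, it is in fact (geometrically) integral. By constructibility of the locus where fibres of a morphism are geometrically integral, after shrinking $T$ to a dense open $T'$, every geometric fibre of $\sY \to T'$ is geometrically integral; let $S' \subset S_0$ be the (open) image of $T'$. Take $U_1 \subset \Spec\fo$ to be a dense open on which all relevant schemes are flat, $|G|$ is invertible, and the geometric-integrality properties above persist. For $\fp \in U_1$ and $\bar s \in S'(\bar\FF_\fp)$, surjectivity of $T' \to S'$ on geometric points produces a lift $\bar t \in T'(\bar\FF_\fp)$, so that $Y_\gamma(\bar s) \cong \sY_{\bar t}$ is a geometrically integral $\bar\FF_\fp$-variety of dimension $d := \dim Y_\gamma(\bar\eta)$. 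For any $\FF_\fp$-variety $F$ geometrically isomorphic to $Y_\gamma(\bar s)$, Lang--Weil gives
\[
|F(\FF_\fp)| = |\FF_\fp|^d + O\bigl(|\FF_\fp|^{d-1/2}\bigr),
\]
and shrinking $U_1$ to a dense open $U$ on which $|\FF_\fp|$ is sufficiently large then forces $F(\FF_\fp) \neq \emptyset$.

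The main obstacle is the uniformity of Lang--Weil across \emph{arbitrary} $\FF_\fp$-forms of $Y_\gamma(\bar s)$, not merely those arising directly in our family. This is handled by the observation that the constants in Lang--Weil depend only on the geometric dimension and the degree of some projective compactification, both of which are invariants of the geometric variety; since the family $\sY \to T'$ is bounded, these invariants are bounded uniformly over $\bar s \in S'$, and hence over all forms. The generalisation from $\Zn$ to $G$ introduces no new ideas beyond this: the specific structure of $\Zn$ entered the original proof only at the lifting step and in the passage between ``connected torsor'' and ``surjection of $\pi_1$'', both of which carry over verbatim to general $G$.
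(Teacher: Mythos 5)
Your overall route is the same as the paper's: realise $\gamma$ as an honest $G$-torsor after an \'etale base change over a dense open of $S$, use connectedness of $Y_\gamma(\bar\eta)$ to get integrality of the torsor over the geometric generic fibre, and finish with a Lang--Weil statement on a dense open that is uniform over all $\FF_\fp$-forms. The paper does exactly this, except that it deduces integrality of the total space and splitness of the generic fibre from normality of $Z$ via \cite[Expos\'e~I, Proposition~10.1]{SGA1} and then simply cites \cite[Lemma~3.6]{BBL:wa} for the uniform point-counting statement; as you say, the passage from $\Zn$ to a general finite $G$ costs nothing in this part.

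The genuine weak point is your justification of the uniformity of Lang--Weil over \emph{arbitrary} $\FF_\fp$-forms. The degree of a projective compactification is not an invariant of the geometric variety: it depends on the chosen embedding, and an $\FF_\fp$-variety $F$ that is only isomorphic to $Y_\gamma(\bar s)$ over $\bar\FF_\fp$ comes with no $\FF_\fp$-embedding whose degree you control (nor is the splitting degree of the form bounded, so you cannot descend or transport the family's embedding). As written, this step does not go through. The standard repair is to bound a quantity that really is geometric: the sum of compactly supported $\ell$-adic Betti numbers, which is the same for $F$ and for $Y_\gamma(\bar s)$ and is uniformly bounded over the family $\sY \to T'$ by constructibility of the higher direct images with proper support (or Katz's uniform bounds); the Grothendieck--Lefschetz trace formula together with Deligne's weight bounds then gives $\#F(\FF_\fp) = |\FF_\fp|^d + O\bigl(|\FF_\fp|^{d-1/2}\bigr)$ with a constant depending only on that Betti sum. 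Equivalently, one can just invoke \cite[Lemma~3.6]{BBL:wa}, which packages precisely this uniformity over forms; that is what the paper does. A further small point: ``connected and finite \'etale over an integral base, hence integral'' needs the base to be normal (or geometrically unibranch) --- a connected \'etale cover of a non-normal integral scheme can be reducible. Here $Z_{\bar\eta}$ is indeed normal, since $Z$ is normal and the generic point has characteristic zero in the only non-vacuous case, but this should be said, and it is exactly where the paper's appeal to normality via SGA1 enters.
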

\begin{proof}
As in the proof of~\cite[Lemma~5.21]{BBL:wa}, we may replace $S$ by an \'etale $T \to S$ and thereby assume that $\gamma$ lifts to $\gamma' \in \H^1(Z,G)$.  Let $f \colon Y \to Z$ be a torsor representing the class $\gamma'$.  
The scheme $Z$ is normal and integral, and so we are in the situation of~\cite[Expos\'e~I, Proposition~10.1]{SGA1}: the connected components of $Y$ are in bijection with the connected components of the generic fibre of $f$.
If the generic fibre of $f$ were not connected, then $Y_\eta$ would also not be connected; but
the fibre $Y_{\bar{\eta}}$ is our original $Y_\gamma(\bar{\eta})$, assumed to be connected.  It follows that $Y$ is also connected; since $Z$ is normal, so is $Y$, and so $Y$ is integral.  
Also, since $Z_{\bar{\eta}}$ is integral, it follows that $Y_{\bar{\eta}}$ is integral, and so $Y_\eta$ is split.  Now applying~\cite[Lemma~3.6]{BBL:wa} to $Y \to S$ gives open sets $S' \subset S$ and $U \subset \Spec \fo$ with the desired property.
\end{proof}

The way that Lemma~\ref{lem:ffpoints} is used in our proof is to show the surjectivity of certain evaluation maps; we now state that application as a corollary.

Suppose that we are in the situation of Lemma~\ref{lem:ffpoints}.
Let $\fp$ be a prime of $\fo$ and let $s$ be any point of $S'(\FF_\fp)$.  Suppose that $Z_s$ is geometrically integral.  (Since $Z_\eta$ is geometrically integral, this happens for $s$ in a dense open subset of $S'$.)
The Hochschild--Serre spectral sequence for the sheaf $G$ on $Z_s$ gives an exact sequence as follows:
\[
0 \to \H^1(\FF_\fp, G) \to \H^1(Z_s,G) \to \H^0(\FF_\fp, \H^1(Z_{\bar{s}},G)) \to \H^2(\FF_\fp, G).
\]
Because $\FF_\fp$ has cohomological dimension $0$, the group $\H^2(\FF_\fp, G)$ is trivial. 
We can therefore obtain a torsor $Y_{\gamma,s} \to Z_s$ by specialising $\gamma$ to obtain an element of $\H^0(\FF_\fp, \H^1(Z_{\bar{s}}, G))$ and then lifting to $\H^1(Z_s,G)$.  Different choices of lifts will give choices of $Y_{\gamma,s}$ differing by twisting by a constant torsor.  Over the algebraic closure of $\FF_\fp$, the variety $Y_{\gamma,s}$ is isomorphic to the variety $Y_\gamma(\bar{s})$ of the Lemma.

\begin{corollary}\label{cor:ffsurj}
Under the conditions of Lemma~\ref{lem:ffpoints}, let $\fp$ lie in $U$ and let $s$ be any point of $S'(\FF_\fp)$ such that $Z_s$ is geometrically integral.  Let $Y_{\gamma,s} \to Z_s$ be a torsor as above.  Then the evaluation map $Z_s(\FF_\fp) \to \H^1(\FF_\fp,G)$ associated to the torsor $Y_{\gamma,s} \to Z_s$ is surjective.
\end{corollary}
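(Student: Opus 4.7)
The plan is to reduce surjectivity of the evaluation map to Lemma~\ref{lem:ffpoints} by a standard twisting argument, exploiting the fact that $G$ is abelian so that twisting preserves the $G$-torsor structure.

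Fix a target class $\beta \in \H^1(\FF_\fp, G)$, and choose a $G$-torsor $T$ over $\Spec \FF_\fp$ representing $\beta$. Denote by $T_{Z_s}$ the pullback of $T$ to $Z_s$. Since $G$ is abelian, the contracted product $Y' := Y_{\gamma,s} \wedge^G T_{Z_s}^{-1}$ is again a $G$-torsor over $Z_s$. By the standard behaviour of twisting, for each $P \in Z_s(\FF_\fp)$ the fibre satisfies $[Y'_P] = [(Y_{\gamma,s})_P] - \beta$ in $\H^1(\FF_\fp, G)$, so $Y'$ admits an $\FF_\fp$-rational point lying above $P$ if and only if the evaluation of $Y_{\gamma,s}$ at $P$ equals $\beta$.

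The key observation is that $Y'$ is geometrically isomorphic to $Y_\gamma(\bar{s})$. Indeed, since $\bar{\FF}_\fp$ is algebraically closed we have $\H^1(\bar{\FF}_\fp, G) = 0$, so the torsor $T$ becomes trivial after base change to $\bar{\FF}_\fp$, and hence $Y'_{\bar{\FF}_\fp} \cong (Y_{\gamma,s})_{\bar{\FF}_\fp}$. Combined with the remark in the paragraph preceding the corollary, which identifies $(Y_{\gamma,s})_{\bar{\FF}_\fp}$ with $Y_\gamma(\bar{s})$, this shows that $Y'$ is an $\FF_\fp$-variety geometrically isomorphic to $Y_\gamma(\bar{s})$.

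Applying Lemma~\ref{lem:ffpoints} then produces an $\FF_\fp$-rational point of $Y'$; its image $P \in Z_s(\FF_\fp)$ is, by the preceding discussion, a point at which the evaluation of $Y_{\gamma,s}$ equals $\beta$. Since $\beta$ was arbitrary, surjectivity follows. There is no real obstacle in this proof; the only point requiring a little care is the verification that the twist computes the desired fibre of the evaluation map, which is purely formal and uses only that $G$ is abelian.
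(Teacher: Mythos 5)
Your proof is correct and takes essentially the same route as the paper's: a twisting argument reducing surjectivity to the existence of $\FF_\fp$-points guaranteed by Lemma~\ref{lem:ffpoints}. The paper states the twist step tersely ("twisting $Y_{\gamma,s}$ by any class in $\H^1(\FF_\fp,G)$ and applying the same argument"), while you spell out the contracted-product construction, the fibre-class identity $[Y'_P]=[(Y_{\gamma,s})_P]-\beta$, and the crucial check that the twist remains geometrically isomorphic to $Y_\gamma(\bar s)$ (using $\H^1(\bar\FF_\fp,G)=0$) so that the Lemma applies to it — these are exactly the details the paper leaves implicit.
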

\begin{proof}
This is a standard twisting argument.  A point $z \in Z_s(\FF_\fp)$ evaluates to $0\in \H^1(\FF_\fp,G)$ if and only if the fibre $(Y_{\gamma,s})_z$ has an $\FF_\fp$-rational point, that is, $z$ lies in the image of $Y_{\gamma,s}(\FF_\fp) \to Z_s(\FF_\fp)$.  The conclusion of the Lemma ensures that $Y_{\gamma,s}(\FF_\fp)$ is non-empty, and so $0$ lies in the image of the evaluation map.  Twisting $Y_{\gamma,s}$ by any class in $\H^1(\FF_\fp,G)$ and applying the same argument shows that the evaluation map is indeed surjective.
\end{proof}

Having established these lemmas, we are now ready to prove the proposition.  The following proof closely follows that of~\cite[Proposition~4.2]{BBL:wa} in concept, but we must make substantial changes to be able to apply it to finite subgroups of $\H^0(K, \H^2(X_{\bar{\eta}},\mmu_n))$ instead of to single elements.

\begin{proof}[of Proposition~\ref{prop:main}]
To prove the proposition, we may replace $\sB$ by any open subset containing both $U$ and $d$.
By doing so, we can ensure that the following conditions hold:
\begin{itemize}
\item  $n$ is invertible on $\sB$;
\item $\sB$ is regular and flat over $\fo$;
\item $\sD$ is regular;
\item the fibre of $\sX$ above every geometric point of $\sD$ is integral;
\item $A$ is contained in the image of $\H^0(\sB \setminus \sD, \R^2 \pi_* \mmu_n) \to \H^0(K, \H^2(X_{\bar{\eta}},\mmu_n))$.
\end{itemize}
The proof that these conditions may be satisfied after shrinking $\sB$ is routine and details may be found in the proof of~\cite[Proposition~4.2]{BBL:wa}.  From now on, we place ourselves in this situation.
Furthermore, replacing $\sX$ by the open subscheme where $\pi$ is smooth ensures that $\rho_d(A)$ is contained in the image of the natural map $\H^0(\sD, \R^1(\pi_\sD)_* \Zn) \to \H^0(\ff{d}, \H^1(X_{\bar{d}}^\sm, \Zn))$.

Let $P \in U(k)$ be a point such that $X_P$ is everywhere locally soluble.  Let $\sP$ denote the Zariski closure of $P$ in $\sB$ and suppose that $\sP$ meets $\sD$ transversely at a closed point $s$ of $\sB$.  Let $\fp$ be the prime of $\fo$ over which $s$ lies.
We will study the map $X_P(k_\fp) \to \dual{A}$ given by the pairing~\eqref{eq:pairqz}; our goal is to show that the image of this map contains the image of the homomorphism $\dual{\H^0(\ff{d},\H^1(X_{\bar{d}}^\sm, \Zn))} \to \dual{A}$.  We will attempt to prove this, and will show that we can succeed at the expense of assuming that $s$ lies in a certain dense open subset $\sV \subset \sD$.

Functoriality of the residue maps, proved in Proposition~5.6 of~\cite{BBL:wa}, gives us a commutative diagram as follows.

\begin{equation}\label{eq:bigcd}
\begin{CD}
\H^0(K, \H^2(X_{\bar{\eta}}, \mmu_n)) @>{\rho_d}>> \H^0(\ff{d}, \H^1(\sX_{\bar{d}}, \Zn)) \\
@A{\eta^*}AA @A{d^*}AA \\
\H^0(\sB \setminus \sD, \R^2 \pi_* \mmu_n) @>{\rho_\sD}>> \H^0(\sD, \R^1 (\pi_\sD)_* \Zn) \\
@V{P^*}VV @V{s^*}VV \\
\H^0(k, \H^2(X_{\bar{P}}, \mmu_n)) @>{\rho_s}>> \H^0(\ff{s}, \H^1(\sX_{\bar{s}},\Zn)) \\
@A{\sim}AA @A{\sim}AA \\
\H^2(X_P, \mmu_n)/\H^2(k,\mmu_n) @>>> \H^1(\sX_s,\Zn)/\H^1(\ff{s},\Zn) \\
@VVV @| \\
\Br X_P[n]/\Br k[n] @>{\partial}>> \H^1(\sX_s,\Zn)/\H^1(\ff{s},\Zn)
\end{CD}
\end{equation}
Here $\partial$ is the residue map associated to the prime divisor $\sX_s$ on the scheme $\sX_\sP$.
If a class $\alpha$ lies in the top left-hand group, then $\spec_P(\alpha)$ is its image in the bottom left-hand group.

Now consider evaluating a class in $\Br X_P$ at points of $X_P(k_\fp)$.  
Write $\FF$ for the residue field $\ff{s} = \fo/\fp$. 
Since $\pi$ is no longer assumed to be proper, not all such points extend to $\fo$-points of $\sX$; write $X_P(k_\fp)^\circ$ for the set of those that do.  Because $\pi$ is smooth, Hensel's Lemma shows that the reduction map $X_P(k_\fp)^\circ \to \sX_s(\FF)$ is surjective.  We assume for now that $X_P(k_\fp)^\circ$ is non-empty; later we will choose $P$ to ensure that this is the case.  Fix a base point $x_0 \in X_P(k_\fp)^\circ$.
Proposition~5.1 of~\cite{Bright:bad} and its corollaries give a commutative diagram
\[
\begin{CD}
\Br X_P[n] @. \;\times\; @. X_P(k_\fp)^\circ @>>> \Br k_\p[n] \\
@V{\partial}VV @. @VVV @V{\sim}VV \\
\H^1(\sX_s,\Zn) @. \;\times\; @. \sX_s(\FF) @>>> \H^1(\FF,\Zn).
\end{CD}
\]
It follows that the induced map $X_P(k_\p)^\circ \to \Hom(\Br X_P[n], \Br k_\p[n])$ factors as
\begin{multline*}
X_P(k_\p)^\circ \to \sX_s(\FF) \to \Hom(\H^1(\sX_s,\Zn), \H^1(\FF,\Zn)) \\
\to \Hom(\Br X_P[n], \Br k_\p[n]),
\end{multline*}
where the first map is reduction modulo $\p$, the second sends a point to the evaluation map at that point, and the third is given by pre-composing with $\partial$ and post-composing with the inverse of the natural isomorphism $\Br k_\p[n] \to \H^1(\FF,\Zn)$.  From this, and from commutativity of the diagram~\eqref{eq:bigcd}, we deduce that the map $X(k_\p)^\circ \to \Hom(A,\Br k_\p[n])$ defined by the pairing~\eqref{eq:pairbr} similarly factors as
\begin{equation}\label{eq:comp}
X_P(k_\p)^\circ \to \sX_s(\FF) \xrightarrow{e_P} \Hom(\rho_d(A), \H^1(\FF,\Zn)) \to \Hom(A,\Br k_\p[n]).
\end{equation}
Here the map $e_P$ can be described as follows.  Any $\beta$ lying in the image $\rho_d(A)$ extends to $\H^0(\sD, \R^1(\pi_\sD)_* \Zn)$ and so can be specialised at $s$ to obtain a class in $\H^0(\FF,\H^1(\sX_{\bar{s}},\Zn))$.  Lift this class to $\beta_s \in \H^1(\sX_s,\Zn)$.  Then $e_P$ sends a point $y \in \sX_s(\FF)$ to the homomorphism $\beta \mapsto (\beta_s(y) - \beta_s(\tilde{x}_0))$, where $\tilde{x}_0$ is the reduction of $x_0$ modulo $\p$.  This is easily seen to be independent of the lift $\beta_s$.

The right-hand two groups in~\eqref{eq:comp} are canonically identified with the dual groups $\dual{(\rho_d(A))}$ and $\dual{A}$, respectively, by means of the canonical isomorphisms $\H^1(\FF,\Zn) \isom \Br k_\p[n]$ and $\Br k_\p \isom \QZ$.
To prove the proposition, we will exhibit a dense open subset $\sV \subset \sD$ such that, for $P$ as in the statement of the proposition, the image of the composition~\eqref{eq:comp} coincides with the image of the rightmost homomorphism.  The first map is surjective by Hensel's Lemma, so it suffices to show that $e_P$ is surjective.  To accomplish this, we use Lemma~\ref{lem:torsors} to pass from a subgroup of $\H^1(\sX_{\bar{d}},\Zn)$ to a single torsor under a more complicated group, to which Lemma~\ref{lem:ffpoints} will apply.


Write $C=\rho(A)$.  Applying Lemma~\ref{lem:torsors} gives
\[
\Hom(C, \H^1(\sX_{\bar{d}}, \QZ))) \isom \H^1(\sX_{\bar{d}}, \dual{C}).
\] 
Now take Galois invariants (which commutes with taking $\Hom$ from $C$) to obtain
\[
\Hom(C, \H^0(\ff{d}, \H^1(\sX_{\bar{d}}, \QZ))) \isom \H^0(\ff{d}, \H^1(\sX_{\bar{d}}, \dual{C})).
\]
The inclusion map of $C$ in $\H^1(\sX_{\bar{d}},\Zn)$, composed with the natural injection $\H^1(\sX_{\bar{d}},\Zn) \to \H^1(\sX_{\bar{d}}, \QZ)$, gives an element of the left hand side, and so corresponds to an element $\gamma$ of the right hand side.  
This gives a connected torsor $Y_{\bar{d}} \to \sX_{\bar{d}}$ under $\dual{C}$.

By construction, $C$ lies in the image of $\H^0(\sD, \R^1(\pi_\sD)_*\Zn)$; the functoriality of the isomorphisms of Lemma~\ref{lem:torsors} then shows that $\gamma$ extends to an element of $\H^0(\sD, \R^1 (\pi_\sD)_* \dual{C})$.  The conditions of Lemma~\ref{lem:ffpoints} are satisfied, and we obtain dense open subsets $S' \subset \sD$ and $U \subset \Spec \fo$ as in that lemma.  Let $\sV \subset \sD$ be the intersection of $S'$ with the inverse image of $U$.  We will now assume that $s$ lies in $\sV$, and prove that $e_P$ is indeed surjective.

Specialising $\gamma$ at $s$ and choosing a lift to $\H^1(\sX_s,\dual{C})$ gives a torsor $Y_s \to \sX_s$.  
Corollary~\ref{cor:ffsurj} shows that the evaluation map associated to the torsor $Y_s \to \sX_s$ is surjective.
(In particular, $\sX_s(\FF)$ and therefore $\sX_P(k_\fp)^\circ$ are non-empty, as assumed earlier.)
It follows that the map
\begin{equation}\label{eq:Ymap}
\sX_s(\FF) \to \H^1(\FF,\dual{C}), \quad y \mapsto Y_s(y) - Y_s(\tilde{x}_0),
\end{equation}
which is independent of the choice of lift used to define $Y_s$, is also surjective.
Composing with the isomorphism $\H^1(\FF,\dual{C}) \to \Hom(C,\H^1(\FF,\QZ))$ of Lemma~\ref{lem:torsors} gives a map $\sX_s(\FF) \to \Hom(C,\H^1(\FF,\Zn))$, where as usual we identify the group $\H^1(\FF,\Zn)$ with the $n$-torsion subgroup of $\H^1(\FF,\QZ)$.  The functoriality of Lemma~\ref{lem:torsors} shows that this map is none other than $e_P$.  So $e_P$ is indeed surjective, proving the proposition.
\end{proof}

\section{Proof of Theorem~\ref{thm}}\label{sec:sieve}

To deduce Theorem~\ref{thm} from Corollary~\ref{cor:prolific}, we must show that the hypotheses are satisfied when $A$ is the whole of $\H^1(K, \Pic X_{\bar{\eta}})$, and then show that 100\% of points $P \in U(k)$ satisfy the condition required to establish vanishing of the Brauer--Manin obstruction.  The conclusion of Theorem~\ref{thm} is vacuously true if $\H^1(K, 
\Pic X_{\bar{\eta}})$ is trivial, so assume that this is not the case.

Firstly, the conditions $(\star)$ demanded throughout Section~\ref{sec:surj} do indeed follow from the assumptions of Theorem~\ref{thm}.  Specifically, it is here that we use conditions~(\ref{codim1}) and~(\ref{codim2}) of Theorem~\ref{thm}.

The requirement that $\Pic X_{\bar{\eta}}$ be torsion-free implies that $\H^1(X_{\bar{\eta}}, \OO_{X_{\bar{\eta}}})$ is trivial; therefore the Picard group of $X_{\bar{\eta}}$ is the same as its N\'eron--Severi group, and so is finitely generated and free.  Therefore $\H^1(K, \Pic X_{\bar{\eta}})$ is finite; fix a positive integer $n$ such that every element of $\H^1(K, \Pic X_{\bar{\eta}})$ has order dividing $n$.  
Let $\sX \to \sB = \PP^m_\fo$ be a model of $\pi$ over $\fo$.

By Corollary~5.16 of~\cite{BBL:wa}, the residue homomorphism
\[
\oplus_d \rho_d \colon \H^1(K, \Pic X_{\bar{\eta}}) \to \bigoplus_{d \in (\PP^m_k)^{(1)}} \H^0(\ff{d}, \H^1(X_{\bar{d}}^\sm,\Zn))
\]
is injective.  It is here that we use conditions~(\ref{h1zero}), (\ref{brzero}) and~(\ref{h2inj}) of Theorem~\ref{thm}. 
So Corollary~\ref{cor:prolific} applies.  Let $d_1, \dotsc, d_r$ be the finitely many points $d \in \PP^m_k$ of codimension $1$ for which the image of $\rho_{d_i}$ is non-zero.  We obtain, for each $i$, a non-empty open subset $\sV_i \subset \sD_i$, where $\sD_i$ is the Zariski closure of $d_i$ in $\sB = \PP^m_\fo$.

Let $T$ denote the set $U(k) \cap \pi(X(\Adele_k))$, and let $T_i$ be the set of points $P \in T$ such that the Zariski closure of $P$ in $\PP^m_\fo$ meets $\sV_i$ transversely in at least one point.  To prove Theorem~\ref{thm}, we must show that $\bigcap_{i=1}^r T_i$ contains 100\% of the points of $U(k)$.  Proposition~4.3 of~\cite{BBL:wa} states that the number of points of $T \setminus T_i$ of height at most $B$ is $O(B^{m+1}/\log B)$. We have
\begin{multline*}
\# \{ P \in T : H(P) \le B \} - \#\{ P \in \bigcap _{i=1}^r T_i : H(P) \le B \} \\
\le \sum_{i=1}^r \#\{ P \in T \setminus T_i : H(P) \le B \}  
= O \left( \frac{B^{m+1}}{\log B} \right).
\end{multline*}

On the other hand, Theorem~1.3 of~\cite{BBL:wa} states that the number of rational points $P \in \pi(X(\Adele_X))$ of height at most $B$ is asymptotic to a constant times $B^{m+1}$.  (It is here that we use condition~(\ref{locsol}) of Theorem~\ref{thm}.)  Since $U$ is an open subset in $\PP^m$, its complement is closed and therefore thin; then~\cite[Section~13.1, Theorem~3]{Serre:LMWT} shows that the number of points in $T$ of height at most $B$ is also asymptotic to a constant times $B^{m+1}$.  (Note that Serre's height is normalised differently to ours, accounting for the factor of $d=[k:\QQ]$ appearing in Serre's formula.)  Putting these together completes the proof of Theorem~\ref{thm}.

\section{Surjectivity of specialisation}\label{sec:spec}

In this section we prove Theorem~\ref{thm2}.  A subset $H \subset \PP^m(k)$ is said to be \emph{Hilbertian} if there is a non-empty Zariski open subset $W \subset \PP^m_k$ and a finite \'etale morphism $Y \to W$, with $Y$ integral, such that $H$ consists of those points $P \in W(k)$ for which the fibre $Y_P$ is connected.  In~\cite{Harari:DMJ-1994}, Harari proved a result on specialisation of $\H^1(K, \Pic X_{\bar{\eta}})$, which we now adapt to our situation.  Recall that $U \subset \PP^m_k$ is the open subset over which the fibres of $\pi$ are smooth.

\begin{lemma}
Under the hypotheses of Theorem~\ref{thm2}, there exists a Hilbertian subset $H$ of $U(k)$ such that, for each $P \in H$ with $X_P$ everywhere locally soluble, the composition of the homomorphism~\eqref{eq:h1h2} with $\spec_P$ gives an isomorphism of $\H^1(K, \Pic X_{\bar{\eta}})$ with $\Br X_P[n] / \Br k[n]$.
\end{lemma}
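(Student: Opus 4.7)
The plan is to invoke Harari's specialisation theorem \cite{Harari:DMJ-1994} and reinterpret its conclusion via the Hochschild--Serre spectral sequence on the fibre. Because $\Pic X_{\bar{\eta}}$ is torsion-free and finitely generated, the group $\H^1(K, \Pic X_{\bar{\eta}})$ is finite and killed by $n$; in particular the ``$[n]$'' in~\eqref{eq:h1h2} is no restriction on the source.

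By Harari's theorem, applied under the additional hypothesis $\Br X_{\bar{\eta}} = 0$ of Theorem~\ref{thm2}, there exists a Hilbertian subset $H \subset U(k)$ every point of which satisfies (i) $\Br \bar{X}_P = 0$, so that $\Br X_P$ is entirely algebraic, and (ii) the natural specialisation map $\H^1(K, \Pic X_{\bar{\eta}}) \to \H^1(k, \Pic \bar{X}_P)$ is an isomorphism. For such $P$ with $X_P$ everywhere locally soluble, the Hochschild--Serre spectral sequence for $\Gm$ on $X_P$, combined with the vanishing $\H^3(k, \Gm) = 0$ over the number field $k$, yields the standard isomorphism $\Br_1 X_P/\Br k \xrightarrow{\sim} \H^1(k, \Pic \bar{X}_P)$. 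Assembling (i), (ii) and this isomorphism identifies $\H^1(K, \Pic X_{\bar{\eta}})$ with $\Br X_P/\Br k$; since the left-hand side is killed by $n$, so is the right, and therefore $\Br X_P[n]/\Br k[n] = \Br X_P/\Br k$.

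The remaining, and main, technical point is to verify that the composition of~\eqref{eq:h1h2} with $\spec_P$ coincides with the isomorphism just constructed. Both prescriptions assemble a Kummer connecting homomorphism with a Hochschild--Serre edge map: in~\eqref{eq:h1h2} the Kummer is applied generically (via $\Pic X_{\bar{\eta}}/n \inj \H^2(X_{\bar{\eta}}, \mmu_n)$) and then specialised, whereas in the Harari--Hochschild--Serre route the Kummer is applied on the fibre (via $\Pic \bar{X}_P/n \inj \H^2(\bar{X}_P, \mmu_n)$) after specialising $\Pic$. Compatibility of the two prescriptions amounts to naturality of the Kummer sequence with respect to the base-change map $P^*\colon R^2 \pi_* \mmu_n \to \H^2(\bar{X}_P, \mmu_n)$ and of Hochschild--Serre with respect to the corresponding specialisation on $\Pic$. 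The one subtlety is the subgroup $\H^0(K, \Pic X_{\bar{\eta}})$ quotiented out in~\eqref{eq:h1h2}: it specialises into the image of $\Pic X_P$ in $\H^0(k, \Pic \bar{X}_P)$, which is precisely what is killed in passing from $\H^2(X_P, \mmu_n)/\H^2(k, \mmu_n)$ to $\Br X_P[n]/\Br k[n]$ via the Kummer sequence on $X_P$. Carrying out this diagram chase is where most of the work lies.
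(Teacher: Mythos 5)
Your proposal shares the paper's skeleton (Harari's specialisation theorem, vanishing of $\Br \bar{X}_P$, then cohomological bookkeeping to land on $\Br X_P[n]/\Br k[n]$), but the route you take through the cohomological part has a genuine gap, and it is precisely the one the paper goes out of its way to avoid.

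You construct an isomorphism $\Br_1 X_P/\Br k \isom \H^1(k, \Pic \bar{X}_P)$ using the Hochschild--Serre spectral sequence for $\Gm$ (invoking $\H^3(k,\Gm)=0$), and then you need to show that this coincides with the composition of~\eqref{eq:h1h2} and $\spec_P$, both of which live entirely in the $\mmu_n$-world (Kummer coboundaries and base change on $\R^2\pi_*\mmu_n$). Comparing the $\Gm$-Hochschild--Serre map with the $\mmu_n$-Kummer constructions is exactly the compatibility you defer to ``where most of the work lies'' --- and you do not carry it out. This is not a routine diagram chase: the paper itself, after giving its own proof, includes a remark that the snake-lemma map $\delta$ it constructs is ``presumably'' the same as the $\Gm$-Hochschild--Serre map, ``but we have not verified this.'' In other words, the very compatibility your argument leans on is one the author explicitly declined to prove. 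The paper sidesteps the issue by never leaving $\mmu_n$-coefficients: it applies the snake lemma to the Kummer sequences on $X_P$ and $\bar{X}_P$ to produce a map $\delta\colon \Br X_P[n] \to \H^1(k,\Pic\bar{X}_P)[n]$ directly, uses $\Br\bar{X}_P=0$ and \cite[Lemma~5.20]{BBL:wa} to see $\delta$ has kernel $\Br k[n]$ and is surjective, and then the identification with~\eqref{eq:h1h2}$\,\circ\,\spec_P$ is a definitional unwinding in a single coefficient world.

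A second, smaller gap: you assert that because $\Br X_P/\Br k$ is killed by $n$, one has $\Br X_P[n]/\Br k[n] = \Br X_P/\Br k$. This is true here, but not for formal reasons --- the connecting map $(\Br X_P/\Br k)[n] \to \Br k/n$ must vanish, and $\Br k$ is not $n$-divisible when $n$ is even and $k$ has real places. The required divisibility of $n\cA$ in $\Br k$ uses that $X_P(k_v) \neq\emptyset$ at real places $v$ (so that $n\cA$ has trivial image in each $\Br k_v=\ZZ/2$), and this needs to be said. The paper's argument gets the correct exact sequence $0 \to \Br k[n] \to \Br X_P[n] \xrightarrow{\delta} \H^1(k,\Pic\bar{X}_P)[n] \to 0$ directly from the snake lemma, and so never needs to pass through $\Br X_P/\Br k$ at all.
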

\begin{proof}
For $P \in U(k)$, let $\tilde{U}_P$ be the spectrum of the local ring of $U$ at $P$, and $\tilde{X}_P$ the base change of $X$ to $\tilde{U}_P$.  Harari~\cite[Th\'eor\`eme~3.5.1 et seq.]{Harari:DMJ-1994} has shown that there is a Hilbertian subset $H$ of $U(k)$ such that, if $P$ lies in $H$, then the composition $\Pic X_{\bar{\eta}} \leftarrow \Pic \tilde{X}_P \to \Pic \bar{X}_P$ defines a specialisation homomorphism inducing isomorphisms $\Pic X_{\bar{\eta}} \to \Pic \bar{X}_P$ and $\H^1(K, \Pic X_{\bar{\eta}}) \to \H^1(k, \Pic \bar{X}_P)$.  One checks that the diagram
\[
\begin{CD}
\H^1(K, \Pic X_{\bar{\eta}})[n] @>>> \H^0(K, \H^2(X_{\bar{\eta}}, \mmu_n)) / \H^0(K, \Pic X_{\bar{\eta}}) \\
@VVV @VVV \\
\H^1(k, \Pic \bar{X}_P)[n]  @>>> \H^0(k, \H^2(\bar{X}_P, \mmu_n)) / \H^0(k, \Pic \bar{X}_P)\\
\end{CD}
\]
commutes, where the right-hand map is as at the beginning of Section~\ref{sec:surj}.

Now suppose that $X_P$ is everywhere locally soluble and consider the following diagram, arising from the Kummer sequence on both $X_P$ and $\bar{X}_P$.
\[
\minCDarrowwidth1.2em
\begin{CD}
0 @>>> \Pic X_P/n @>>> \H^2(X_P,\mmu_n) @>>> \Br X_P[n] @>>> 0 \\
@. @V{f}VV @V{g}VV @V{h}VV \\
0 @>>> \H^0(k, \Pic \bar{X}_P/n) @>>> \H^0(k, \H^2(\bar{X}_P,\mmu_n)) @>>> \H^0(k, \Br \bar{X}_P)[n]
\end{CD}
\]

Because $X_P$ is everywhere locally soluble we have $\H^0(k,\Pic \bar{X}_P) = \Pic X_P$.  Also, $\Pic \bar{X}_P$ is torsion-free, and so looking at the multiplication-by-$n$ map on $\Pic \bar{X}_P$ shows that $f$ is injective with cokernel $\H^1(k, \Pic \bar{X}_P)[n]$.   Furthermore, \cite[Lemma~5.20]{BBL:wa} shows that $g$ is surjective, with kernel $\Br k[n]$.  We have $\Br \bar{X}_P=0$, as can be found, for example, in the proof of the above theorem of Harari.  So the Snake Lemma gives an exact sequence
\[
0 \to \Br k[n] \to \Br X_P[n] \xrightarrow{\delta} \H^1(k,\Pic \bar{X}_P)[n] \to 0
\]
and looking at the definition of $\delta$ shows that the composite isomorphism
\[
\H^1(K, \Pic X_{\bar{\eta}})[n] \to \H^1(k, \Pic \bar{X}_P)[n] \xrightarrow{\delta^{-1}} \Br X_P[n] / \Br k[n]
\]
is precisely the isomorphism claimed.
\end{proof}

\begin{remark}
Presumably the map $\delta$ in the above proof is the same as that induced by the homomorphism $\Br_1 X_P \to \H^1(k, \Pic \bar{X}_P)$ in the Hochschild--Serre spectral sequence, but we have not verified this.
\end{remark}

A Hilbertian subset of $U(k)$ is the complement of a thin set, by~\cite[Section~9.2, Proposition~2]{Serre:LMWT}.  By~\cite[Section~13.1, Theorem~3]{Serre:LMWT}, $H$ contains 100\% of the points of $U(k)$, and Theorem~\ref{thm2} follows.

\bibliographystyle{abbrv}
\bibliography{martin}

\end{document}